\titleformat{\chapter}[display]
{\normalfont\huge\bfseries}{\chaptertitlename\\thechapter}{20pt}{\Huge}
\titleformat{\paragraph}[runin]
{\normalfont\normalsize\bfseries}{\theparagraph}{1em}{}
\titleformat{\subparagraph}[runin]
{\normalfont\normalsize\bfseries}{\thesubparagraph}{1em}{}
\titlespacing*{\chapter} {0pt}{50pt}{40pt}
\titlespacing*{\section} {0pt}{3.5ex plus 1ex minus .2ex}{2.3ex plus .2ex}
\titlespacing*{\subsection} {0pt}{3.25ex plus 1ex minus .2ex}{1.5ex plus .2ex}
\titlespacing*{\subsubsection}{0pt}{3.25ex plus 1ex minus .2ex}{1.5ex plus .2ex}
\titlespacing*{\paragraph} {0pt}{3.25ex plus 1ex minus .2ex}{1em}
\titlespacing*{\subparagraph} {\parindent}{3.25ex plus 1ex minus .2ex}{1em}
\newtheorem{theorem}{Theorem}[section]
\newtheorem{lemma}[theorem]{Lemma}
\newtheorem{proposition}[theorem]{Proposition}
\newtheorem{corollary}[theorem]{Corollary}
\theoremstyle{definition}
\newtheorem{definition}[theorem]{Definition}
\newtheorem{example}[theorem]{Example}
\theoremstyle{remark}
\newtheorem{remark}[theorem]{Remark}
\DeclareMathOperator{\End}{End}
\DeclareMathOperator{\cop}{cop}
\DeclareMathOperator{\ide}{id}
\DeclareMathOperator{\Soc}{Soc}
\DeclareMathOperator{\Z}{Z}
\DeclareMathOperator{\op}{op}
\newcommand{\xcirc}{\hspace{0.9pt}}
\newcommand{\ot}{\otimes}
\newcommand{\hs}{\hspace{-0.7pt}}
\newcommand{\dpu}{\mathbin{:}}
\numberwithin{equation}{section}
\DeclareMathAlphabet{\mathpzc}{OT1}{pzc}{m}{it}
\begin{document}
	
\title{Hopf $\mathbf{Q}$-braces structures on rank one pointed Hopf algebras}
	
	\author{Jorge A. Guccione}
	\address{Departamento de Matem\'atica\\ Facultad de Ciencias Exactas y Naturales-UBA, Pabell\'on~1-Ciudad Universitaria\\ Intendente Guiraldes 2160 (C1428EGA) Buenos Aires, Argentina.}
	\address{Instituto de Investigaciones Matem\'aticas ``Luis A. Santal\'o''\\ Pabell\'on~1-Ciudad Universitaria\\ Intendente Guiraldes 2160 (C1428EGA) Buenos Aires, Argentina.}
	\email{vander@dm.uba.ar}
	
	\author{Juan J. Guccione}
	\address{Departamento de Matem\'atica\\ Facultad de Ciencias Exactas y Naturales-UBA\\ Pabell\'on~1-Ciudad Universitaria\\ Intendente Guiraldes 2160 (C1428EGA) Buenos Aires, Argentina.}
	\address{Instituto Argentino de Matem\'atica-CONICET\\ Saavedra 15 3er piso\\ (\!C1083ACA\!) Buenos Aires, Argentina.}
	\email{jjgucci@dm.uba.ar}
	
	\thanks{Jorge A. Guccione and Juan J. Guccione were supported by CONICET PIP 2021-2023 GI,11220200100423CO and CONCYTEC-FONDECYT within the framework of the contest ``Proyectos de Investigaci\'on B\'asica 2020-01'' [contract number 120-2020-FONDECYT]}
	
	\author{Christian Valqui}
	\address{Pontificia Universidad Cat\'olica del Per\'u, Secci\'on Matem\'aticas, PUCP, Av. Universitaria 1801, San Miguel, Lima 32, Per\'u.}
	
	\address{Instituto de Matem\'atica y Ciencias Afines (IMCA) Calle Los Bi\'ologos 245. Urb San C\'esar. La Molina, Lima 12, Per\'u.}
	\email{cvalqui@pucp.edu.pe}
	
\thanks{Christian Valqui was supported by PUCP-CAP 2023-PI0991}
	
\subjclass[2020]{16T25, 16T05}
	
\keywords{Braid equation, Non-degenerate solution, Coalgebras, Hopf algebras}
	
\begin{abstract}
In this paper we determine all the Hopf $q$-brace structures on rank one pointed Hopf algebras and compute the socle of each one of them. We also identify which among them are Hopf skew-braces. Then we determine when two Hopf $q$-brace structures on rank one pointed Hopf algebras are isomorphic, and, finally, we compute all the weak braiding operators on these Hopf algebras.
\end{abstract}	
	
\maketitle
	
\tableofcontents
	
\section*{Introduction}
Let $V$ be a vector space over a field $\mathds{k}$ and let $r\colon V \ot V \to V \ot V$ be a linear map. We say that $r$ satisfies the Yang-Baxter equation on $V$ if
$$
r_{12} \xcirc r_{13}\xcirc r_{23} = r_{23} \xcirc r_{13} \xcirc r_{12}\quad \text{in $\End_{\mathds{k}}(V \ot V \ot V)$,}
$$
where $r_{ij}$ means $r$ acting in the $i$-th and $j$-th components. This occurs if and only if $s\coloneqq \tau\xcirc r$,  where $\tau$ denotes the flip, satisfies the braid equation $s_{12} \xcirc s_{23} \xcirc s_{12} = s_{23} \xcirc s_{12} \xcirc s_{23}$. In~\cite{D}, Drinfeld posed the problem of studying this equation from the set-theoretical point of view. A set theoretic solution of the braid equation is a pair $(Y,s)$, where $s\colon Y\times Y\to Y\times Y$ is a map satisfying this equation. This structure was investigated by many mathematicians due to their connection with semigroup and group theory, knot theory, Hopf algebras, Artin-Schelter regular rings, etcetera. See for instance \cites{AGV, CAV, DG, De1, DDM, Ch, ESS, GI1, GI2, GI3, GI4, GIVB, JO, LYZ}.

Two important notions in this theory are those of $q$-braces (\cite{R2}*{Definition~7}) and skew-braces (\cite{GV}*{Definition~1.1}). A $q$-brace is a group $G$ endowed with two binary operations $(g,h)\mapsto g\cdot h$ and $(g,h)\mapsto g\dpu h$ satisfying suitable conditions. In~\cite{R2}*{Corollary~2 of Proposition~6} it was proved that a skew-brace is a $q$-brace if and only if 
\begin{equation}\label{skew-brace Rump}
(h\dpu g)h=(g\cdot h)g,\quad \text{for all $g,h\in G$.} 
\end{equation}
In the paper \cite{GGV}, we introduced  the notions of Hopf $q$-brace and Hopf skew-brace and we begin the study of their properties. A Hopf $q$-brace $(H,\cdot, \dpu)$ is a Hopf algebra $H$ endowed with binary operations $h\ot l\mapsto h\cdot l$ and $h\ot l\mapsto h\dpu l$ satisfying several conditions (see Definition~\ref{q-brace}). A Hopf skew-brace is a Hopf $q$-brace such that
\begin{equation}\label{skew-brace}
(h\dpu l_{(2)})l_{(1)}=(l\cdot h_{(1)})h_{(2)}, 
\end{equation}
for all $h,l\in H$, where we are using the Sweedler notation (note that our choice of laterality in~\eqref{skew-brace}, for the operations $\cdot$ and $\dpu$, is opposite to the one of~\eqref{skew-brace Rump}). For the relation between Hopf $q$-brace and solutions of the Yang-Baxter equations see \cite{GGV}*{Corollary~4.11, Theorem~5.15 and Proposition~6.4}.

The aim of this paper is to classify all the Hopf $q$-braces whose underlying Hopf algebra is a rank one pointed Hopf algebra over an algebraically closed field of characteristic zero $\mathds{k}$ (see~\cite{KR}) and to determine which ones are Hopf skew-braces. The paper is organized as follows: in Section~\ref{preliminares} we make a very brief review of the notions of Hopf skew-braces and rank one pointed Hopf algebras. In Section~\ref{section 2} we begin the study of the Hopf $q$-brace structures on rank one pointed Hopf algebras. The main result is Proposition~\ref{resumen}. In Section~\ref{Section 3}, in Theorems~\ref{clasificacion 1} and~\ref{n=2}, we determine all the Hopf $q$-brace structures on rank one pointed Hopf algebras. In Section~\ref{Section 3}, we also determine which ones of these structures are Hopf skew-braces. We finish this section determining when two Hopf $q$-braces on rank one pointed Hopf algebras are isomorphic. Then, in Section~\ref{section 4}, we compute the socle of the Hopf $q$-braces obtained in the previous section. Finally, in Section~\ref{section 5}, we use the results from Section~\ref{Section 3} to compute the weak braiding operators on rank one pointed Hopf algebras.

\smallskip

\noindent {\bf Precedence of operations}\enspace The operations precedence in this paper is as follows: the operators with the highest precedence are the binary operations $a^b$ and ${}^ba$, followed by the multiplication $ab$. Next come the binary operations $\cdot$ and $\dpu$, and finally, the operations $+$ and $-$.

\section{Preliminaries}\label{preliminares}                  

\subsection[Hopf \texorpdfstring{$q$}{q}-braces]{Hopf $\mathbf{q}$-braces}
Let $X$ be a coalgebra and let $p,d\colon X^2\to X$ be maps such that $\epsilon\xcirc p=\epsilon\xcirc d=\epsilon\ot \epsilon$. For each $x,y\in X$, we set $x\cdot y\coloneqq p(x\ot y)$ and $x\dpu y\coloneqq d(x\ot y)$. Let $\overline{G}_{\mathcal{X}}\in \End_{\mathds{k}}(X^2)$ be the map defined by $\overline{G}_{\mathcal{X}}(x\ot y)\coloneqq x\cdot {y_{(1)}}\ot y_{(2)}$.

\begin{definition}\label{q-magma coalgebra} We say that $\mathcal{X}=(X,\cdot,\dpu)$ is a  {\em $q$-magma coalgebra} if the map $h\colon X\ot X^{\cop}\to X^{\cop}\ot X$, defined by $h(x\ot y)\coloneqq y_{(1)}\dpu x_{(2)}\ot x_{(1)}\cdot y_{(2)}$, is a coalgebra morphism. If necessary, we will write $h_{\mathcal{X}}$ instead of $h$. 
\end{definition}

\begin{remark}\label{d y p son morfismos de coalgears}  $\mathcal{X}$ is a $q$-magma coalgebra if and only if $p,d\colon X\ot X^{\cop}\to X$ are coalgebra maps and
\begin{equation}\label{intercambio . :}
v_{(1)}\dpu u_{(2)}\ot u_{(1)}\cdot v_{(2)}=v_{(2)}\dpu u_{(1)}\ot u_{(2)}\cdot v_{(1)}\qquad \text{for all $u,v\in X$.}
\end{equation}
\end{remark}

\begin{remark}\label{opuesto de q magma coalgebra} If $\mathcal{X}=(X,\cdot,\dpu)$ is a $q$-magma coalgebra, then $\mathcal{X}^{\op}\coloneqq (X^{\cop},\dpu, \cdot)$ is also. We call~$\mathcal{X}^{\op}$~the~{\em~op\-po\-site} $q$-magma coalgebra of $\mathcal{X}$. Equality~\eqref{intercambio . :} says that $h_{\mathcal{X}^{\op}}= \tau\xcirc h_{\mathcal{X}}\xcirc \tau$.
\end{remark}

\begin{definition}\label{q-magma coalgebra no degenerada a izquierda} A $q$-magma coalgebra $\mathcal{X}$ is {\em left regular} if the map $\overline{G}_{\mathcal{X}}$ is invertible. If~$\mathcal{X}^{\op}$ is left regular, then we call $\mathcal{X}$ {\em right regular}. Finally, we say that $\mathcal{X}$ is {\em regular} if it is left regular and right regular.
\end{definition}

\begin{definition}\label{def: q cycle coalgebra} A {\em $q$-cycle coalgebra} is a left regular $q$-magma coalgebra $\mathcal{X}=(X,\cdot,\dpu)$ such that:
\begin{align}		
& (u \cdot v_{(1)})\cdot (w\dpu v_{(2)})=(u\cdot w_{(2)})\cdot (v\cdot w_{(1)}),\label{1 de def: q cycle coalgebra}\\
&(u \cdot v_{(1)})\dpu (w\cdot v_{(2)})=(u\dpu w_{(2)})\cdot (v\dpu w_{(1)}),\label{2 de def: q cycle coalgebra}\\
& (u \dpu  v_{(1)})\dpu (w\dpu v_{(2)})=(u\dpu w_{(2)})\dpu (v\cdot w_{(1)}),\label{3 de def: q cycle coalgebra}
\end{align}
for all $u,v,w\in X$.
\end{definition}

\begin{definition}\label{q-brace} Let $H$ be a Hopf algebra and let $\mathcal{H}=(H,\cdot,\dpu)$ be a regular $q$-cycle coalgebra. We~say~that~$\mathcal{H}$ is a {\em Hopf $q$-brace} if $(H,\cdot)$ and $(H,\dpu)$ are right $H^{\op}$-modules and the following equalities hold:
\begin{equation}\label{condicion q-braza}
hk\cdot l = (h\cdot (l_{(1)}\dpu k_{(2)}))(k_{(1)}\cdot l_{(2)})\quad\text{and}\quad hk\dpu l = (h\dpu (l_{(1)}\cdot k_{(2)}))(k_{(1)}\dpu l_{(2)}),
\end{equation}
for all $h,k,l\in H$. Let $\mathcal{K}=(K,\cdot,\dpu)$ be another Hopf $q$-brace. A {\em morphism} $f\colon \mathcal{H}\to \mathcal{K}$ is a Hopf algebra morphism $f\colon H\to K$ such that $f(h\cdot h')=f(h)\cdot f(h')$ and $f(h\dpu h')=f(h)\dpu f(h')$, for all $h,h'\in H$.
\end{definition}

\subsection{Rank one pointed Hopf algebras}
Let $\mathds{k}$ be an algebraically closed field of characteristic zero. A tuple $\mathcal{D}=(G,\chi,a,\alpha)$, consisting of a finite group $G$, a character $\chi\colon G\to \mathds{k}^{\times}$, an element $a\in \Z(G)$ and an element $\alpha\in \mathds{k}$, is a {\em datum} if $\alpha(a^n-1)=0$ or $\chi^n=1$, where $n>1$ is the order of $\chi(a)$.

Given a datum $\mathcal{D}$, {\em the rank one pointed Hopf algebra associated with $\mathcal{D}$} is the Hopf algebra $H_{\mathcal{D}}$ generated as algebra by $G$ and $x$, subject to the group relations for $G$,
$$
x^n=\alpha (a^n-1)\quad\text{and}\quad xg=\chi(g)gx\quad\text{for all $g\in G$.}
$$  
The coalgebra structure of $H_{\mathcal{D}}$ is determined by
$$
\Delta(x)\coloneqq x\ot a + 1\ot x\quad\text{and}\quad \Delta(g)\coloneqq g\ot g\quad\text{for all $g\in G$.}
$$
As usual, we let $\epsilon$ and $S$ denote the counit and the antipode of $H_{\mathcal{D}}$, respectively. Note that $\epsilon(g)=1$ and $S(g)=g^{-1}$ for all $g\in G$, and that $\epsilon(x)=0$ and $S(x)=-xa^{-1}=-\chi(a)^{-1}a^{-1}x$. A basis of $H_{\mathcal{D}}$ as $\mathds{k}$-vector space is $\{gx^m : g\in G\text{ and } 0\le m<n\}$. 

\begin{remark}\label{formula de Delta} Let $q\coloneqq \chi(a)$. Since $(1\ot x)(x\ot a)=q(x\ot a)(1\ot x)$, we have
$$
\Delta(gx^m)=\sum_{0\le k\le m} \binom{m}{k}_{\!q} gx^k\ot ga^kx^{m-k}\quad \text{for all $g\in G$ and $m\ge 0$.}
$$
\end{remark}

\begin{remark} The group $G$ is the set of group like elements of $H_{\mathcal{D}}$.
\end{remark}

\begin{remark} In \cite{KR}*{Theorem~1} it was proved that every finite-dimensional pointed Hopf algebra of rank one over $\mathds{k}$ is isomorphic to $H_{\mathcal{D}}$ for some datum $\mathcal{D}$.  
\end{remark}

\section[Hopf \texorpdfstring{$q$}{q}-braces structures on rank one pointed Hopf algebras]{Hopf $\mathbf{q}$-braces structures on rank one pointed Hopf algebras}\label{section 2}

In this section we fix a datum $\mathcal{D}=(G,\chi,a,\alpha)$ and a Hopf $q$-brace structure $\mathcal{H}=(H_{\mathcal{D}},\cdot,\dpu)$ on $H_{\mathcal{D}}$. Note that $\mathcal{H}$ induces by restriction a $q$-brace structure on $G$ (because $\cdot$ and $\dpu$ send group like elements to group like elements). From now on we set
\begin{alignat*}{3}
& x\cdot g=\sum_{\substack{h\in G \\ 0\le m< n}} \lambda^{xg}_{hm} hx^m, &&\qquad g\cdot x=\sum_{\substack{h\in G \\ 0\le m< n}} \lambda^{gx}_{hm} hx^m, &&\qquad x\cdot x=\sum_{\substack{h\in G \\ 0\le m< n}} \lambda_{hm} hx^m,\\
& x\dpu g=\sum_{\substack{h\in G \\ 0\le m< n}} \xi^{xg}_{hm} hx^m, &&\qquad g\dpu x=\sum_{\substack{h\in G \\ 0\le m< n}} \xi^{gx}_{hm} hx^m, &&\qquad x\dpu x =\sum_{\substack{h\in G \\ 0\le m< n}} \xi_{hm} hx^m,
\end{alignat*}
where $\lambda^{xg}_{hm}, \lambda^{gx}_{hm}, \lambda_{hm}, \xi^{xg}_{hm}, \xi^{gx}_{hm}, \xi_{hm}\in \mathds{k}$.

\begin{lemma}\label{x cdot g} The following facts hold:
	
\begin{enumerate}[itemsep=0.7ex, topsep=1.0ex, label={\emph{\arabic*)}}]
		
\item We have $a\cdot g = a\dpu g=a$, for all $g\in G$. 
	
\item For all $g\in G$, we have $x\cdot g= \lambda^{xg}_{10}(1-a) + \lambda^{xg}_{11} x$ and $x\dpu g = \xi^{xg}_{10}(1-a) + \xi^{xg}_{11} x$.

\item For all $g,h\in G$, we have
$$
\qquad\quad \lambda^{x,gh}_{10}= \lambda^{xh}_{10} + \lambda^{xh}_{11} \lambda^{xg}_{10}, \quad \xi^{x,gh}_{10}= \xi^{xh}_{10} + \xi^{xh}_{11} \xi^{xg}_{10}, \quad \lambda^{x,gh}_{11}= \lambda^{xg}_{11}\lambda^{xh}_{11} \quad \text{and}\quad \xi^{x,gh}_{11}= \xi^{xg}_{11}\xi^{xh}_{11}.
$$
\end{enumerate}
\end{lemma}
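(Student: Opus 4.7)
The plan is to handle the three parts in order, exploiting that $p\coloneqq \cdot$ and $d\coloneqq \dpu$ are coalgebra morphisms $H_\mathcal{D}\ot H_\mathcal{D}^{\cop}\to H_\mathcal{D}$ and that the spaces of $(c,1)$-skew primitives in $H_\mathcal{D}$ have very restricted dimensions.

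I would start with part~(1) by first dispatching the auxiliary identities $1\cdot g = 1 = 1\dpu g$: setting $h = k = 1$ and $l = g$ in~\eqref{condicion q-braza}, and using the right $H^{\op}$-module axiom ($g\dpu 1 = g$, $1\cdot 1 = 1$), I obtain $1\cdot g = (1\cdot g)^2$; since $p$ is a coalgebra map and $\epsilon\xcirc p = \epsilon\ot\epsilon$, $1\cdot g$ is a nonzero grouplike, hence an idempotent in $G$, forcing $1\cdot g = 1$. The same argument works for $\dpu$. The crux is then $a\cdot g = a$. The coalgebra-map property gives
\[
\Delta(x\cdot g) = (x\cdot g)\ot (a\cdot g) + (1\cdot g)\ot (x\cdot g) = (x\cdot g)\ot (a\cdot g) + 1\ot (x\cdot g),
\]
so $x\cdot g$ is an $(a\cdot g, 1)$-skew primitive; more generally, the linear map $R_g\colon u\mapsto u\cdot g$ sends the space $P_{a,1}$ of $(a,1)$-skew primitives of $H_\mathcal{D}$ into $P_{a\cdot g,1}$. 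A direct computation using Remark~\ref{formula de Delta} shows $P_{a,1} = \mathds{k}(a-1)\oplus \mathds{k}x$ is two-dimensional, whereas $P_{b,1}$ has dimension at most one for $b\neq a$. Left regularity of $\mathcal{H}$ makes $R_g$ injective (if $w\cdot g = 0$ then $\overline{G}_\mathcal{H}(w\ot g) = 0$, so $w = 0$), and the resulting dimension mismatch forces $a\cdot g = a$. The symmetric argument, using right regularity of $\mathcal{H}$ to make $u\mapsto u\dpu g$ injective, yields $a\dpu g = a$.

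Given part~(1), part~(2) is immediate: $x\cdot g$ lies in $P_{a,1} = \mathds{k}(a-1)\oplus \mathds{k}x$, so expanding in that basis and comparing with the expansion in $\{hx^m\}$ yields the stated formula (the coefficient of $a$ being $-\lambda^{xg}_{10}$ and all other coefficients zero); the case of $\dpu$ is identical. For part~(3), I would use that $(H_\mathcal{D},\cdot)$ is a right $H_\mathcal{D}^{\op}$-module, which translates to $x\cdot (gh) = (x\cdot h)\cdot g$. Substituting the part~(2) expression for $x\cdot h$, using bilinearity of the action, and applying $(1-a)\cdot g = 1\cdot g - a\cdot g = 1-a$ from part~(1), I obtain
\[
(x\cdot h)\cdot g = \bigl(\lambda^{xh}_{10} + \lambda^{xh}_{11}\lambda^{xg}_{10}\bigr)(1-a) + \lambda^{xh}_{11}\lambda^{xg}_{11}\,x,
\]
and comparing with the part~(2) expansion of $x\cdot (gh)$ produces the two multiplicativity formulas for $\lambda$; the identities for $\xi$ follow by replacing $\cdot$ with $\dpu$ throughout.

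The main obstacle is pinning down $a\cdot g = a$. It rests on the structural fact that $a$ is the unique grouplike $c$ with $\dim P_{c,1}(H_\mathcal{D}) > 1$ (a feature of rank one pointed Hopf algebras), coupled with the injectivity of $R_g$ supplied by regularity; once this is secured, the remainder is just bookkeeping.
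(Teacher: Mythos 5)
Your proof is correct and follows essentially the same route as the paper's: both rest on $p$ and $d$ being coalgebra morphisms $H_{\mathcal{D}}\ot H_{\mathcal{D}}^{\cop}\to H_{\mathcal{D}}$, on (left/right) regularity to rule out $x\cdot g\in\mathds{k}[G]$, and on the right $H_{\mathcal{D}}^{\op}$-module axiom $x\cdot (gh)=(x\cdot h)\cdot g$ for item~3. The only difference is presentational: you package the paper's explicit coefficient comparison in $\Delta(x\cdot g)$ as a dimension count on the skew-primitive spaces $P_{c,1}$ (which amounts to the same computation via Remark~\ref{formula de Delta}), and you make explicit the derivation of $1\cdot g=1$ from~\eqref{condicion q-braza}, which the paper absorbs into the induced $q$-brace structure on $G$.
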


\begin{proof} We prove the results for $\cdot$ and leave the results for $\dpu$ to the reader. From the identities
\begin{align*}
\sum_{\substack{h\in G \\ 0\le m< n}} \sum_{0\le k\le m} \lambda^{xg}_{hm} \binom{m}{k}_{\!q} hx^k\ot ha^kx^{m-k} &= \Delta(x\cdot g)\\
&= x\cdot g\ot a\cdot g + 1\cdot g \ot x\cdot g\\
&=\sum_{\substack{h\in G \\ 0\le m< n}} \lambda^{xg}_{hm} hx^m \ot a\cdot g + 1\ot \sum_{\substack{h\in G \\ 0\le m< n}} \lambda^{xg}_{hm} hx^m, 
\end{align*}
it follows that $\lambda^{xg}_{hm}=0$, for $m>1$; and also that
\begin{align}
& \sum_{h\in G} \lambda^{xg}_{h0} h\ot h= \sum_{h\in G}\lambda^{xg}_{h0}  h\ot  a\cdot g  + 1\ot \sum_{h\in G} \lambda^{xg}_{h0} h\label{eq1}  
\shortintertext{and}
&\sum_{h\in G} \lambda^{xg}_{h1} (hx\ot ha + h\ot hx) =  \sum_{h\in G} \lambda^{xg}_{h1} hx \ot a\cdot g + 1\ot \sum_{h\in G} \lambda^{xg}_{h1} hx. \label{eq2}
\end{align}
Furthermore, since $\mathcal{H}$ is left regular, $x\cdot g\notin \mathds{k}[G]$. So, necessarily, $\lambda^{xg}_{h1}\ne 0$, for some $h$. But identity~\eqref{eq2} implies that $\lambda^{xg}_{h1}=0$ if $ha\ne a\cdot g$ or $h\ne 1$. Thus, $a\cdot g=a$ and~$\lambda^{xg}_{h1}\ne 0$ if and only if $h=1$. Moreover, by identity~\eqref{eq1} we know that $\lambda^{xg}_{10}= -\lambda^{xg}_{a0}$ and $\lambda^{xg}_{h0}=0$, for $h\notin \{1,a\}$. So, items~1) and~2) are true for $\cdot$. Finally, for all $g,h\in G$, we have
$$
\lambda^{x,gh}_{10}(1-a) + \lambda^{x,gh}_{11} x=(x\cdot h)\cdot g= \bigl(\lambda^{xh}_{10}(1-a) + \lambda^{xh}_{11} x\bigr)\cdot g = \lambda^{xh}_{10}(1-a) + \lambda^{xh}_{11} \lambda^{xg}_{10}(1-a) + \lambda^{xh}_{11} \lambda^{xg}_{11}x,
$$ 
from which the first and third equalities in item~3) follow.
\end{proof}	

\begin{corollary}\label{a^r cdot g} We have $a^r\cdot g = a^r\dpu g=a^r$, for all $g\in G$ and $r\in \mathds{N}_0$.
\end{corollary}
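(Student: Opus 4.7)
The plan is induction on $r$. The case $r = 1$ is exactly Lemma~\ref{x cdot g}(1). For the inductive step with $r \geq 2$, I would take $h = a$, $k = a^{r-1}$ and $l = g$ in the first identity of~\eqref{condicion q-braza}, noting that $hk = a^r$ in $H$. Since $a^{r-1}$ and $g$ are group-like, their coproducts are diagonal and the Sweedler sum collapses to
\[
a^r \cdot g = \bigl(a \cdot (g \dpu a^{r-1})\bigr)\,(a^{r-1} \cdot g).
\]
The element $g \dpu a^{r-1}$ lies in $G$ because $\dpu$ restricts to a binary operation on the set of group-likes, so Lemma~\ref{x cdot g}(1) gives $a \cdot (g \dpu a^{r-1}) = a$; combining this with the inductive hypothesis $a^{r-1} \cdot g = a^{r-1}$ yields $a^r \cdot g = a\,a^{r-1} = a^r$.

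For the base case $r = 0$, the required identity is $1 \cdot g = 1$. Setting $h = k = 1$ and $l = g$ in the same identity of~\eqref{condicion q-braza} gives $1 \cdot g = (1 \cdot (g \dpu 1))(1 \cdot g)$. Because $1 \cdot g \in G$ is invertible in $H$, this forces $1 \cdot (g \dpu 1) = 1$, and since $\mathcal{H}$ restricts to a regular $q$-brace on $G$ the map $g \mapsto g \dpu 1$ is a bijection of $G$, so in fact $1 \cdot g = 1$ for every $g \in G$.

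The arguments for $\dpu$ are completely analogous, using the second identity of~\eqref{condicion q-braza} in place of the first throughout; the only change in the base case is to invoke left regularity instead of right regularity, to conclude that $g \mapsto g \cdot 1$ is a bijection of $G$. I do not expect any real obstacle here: the whole proof reduces to tracking group-like Sweedler components and invoking Lemma~\ref{x cdot g}(1) recursively.
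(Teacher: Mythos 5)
Your proof is correct and follows essentially the same route as the paper: induction on $r$ using the first identity of~\eqref{condicion q-braza} with all arguments group-like, so that the Sweedler sums collapse, together with Lemma~\ref{x cdot g}(1) and the fact that $\dpu$ restricts to $G$. The only cosmetic differences are that you peel off the factor $a$ on the left (taking $h=a$, $k=a^{r-1}$) where the paper peels it off on the right (writing $a^{r+1}\cdot g=(a^{r}\cdot(g\dpu a))(a\cdot g)$), and that you spell out the base case $r=0$ via regularity where the paper simply calls it trivial; both variants are valid.
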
 

\begin{proof} For $r=0$ this is trivial and for $r=1$ it is true by Lemma~\ref{x cdot g}(1). Assume it is true for $r$. Then
$$
a^{r+1}\cdot g = (a^r\cdot (g\dpu a))(a\cdot g) = a^r a = a^{r+1}.
$$
The proof for $\dpu$ is similar.
\end{proof}

\begin{lemma}\label{g cdot x y g dpu x} The equalities $g\cdot a = g\dpu a = g$ and $g\cdot x =g\dpu x = 0$ hold, for all $g\in G$.
\end{lemma}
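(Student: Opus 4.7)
The plan is to extract both pairs of identities from the commutation relation~\eqref{intercambio . :} of the $q$-magma coalgebra $\mathcal{H}$, applied with $(u,v)=(x,g)$ and $(u,v)=(g,x)$, after first pinning down the behaviour of $\cdot$ and $\dpu$ at~$1$. The right $H^{\op}$-module axiom immediately gives $g\cdot 1=g\dpu 1=g$. To obtain $1\cdot g=1\dpu g=1$, specialize each equation of~\eqref{condicion q-braza} to $h=k=1$ and $l=g\in G$: the first collapses to $1\cdot g=(1\cdot g)(1\cdot g)$ in~$G$, so $1\cdot g$ is an idempotent group-like and therefore equals $1$; the second yields $1\dpu g=1$ by the same argument.

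Next, I would plug $u=x$ and $v=g$ into~\eqref{intercambio . :}. Using $\Delta(g)=g\ot g$, $\Delta(x)=x\ot a+1\ot x$, and substituting $a\dpu g=a$ (Corollary~\ref{a^r cdot g}) together with the unit laws just established, the identity cancels down to
\[
(g\dpu a-g)\ot x\cdot g=g\dpu x\ot(a-1).
\]
By Lemma~\ref{x cdot g}(2), $x\cdot g=\lambda^{xg}_{10}(1-a)+\lambda^{xg}_{11}x$, with $\lambda^{xg}_{11}\ne 0$ as shown in the proof of that lemma using the left regularity of $\mathcal{H}$. Comparing the coefficient of $x$ in the second tensor factor on both sides—which vanishes on the right because $a-1\in\mathds{k}[G]$—forces $\lambda^{xg}_{11}(g\dpu a-g)=0$, whence $g\dpu a=g$. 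Feeding this back, $g\dpu x\ot(a-1)=0$, and since $a\ne 1$ (the order $n>1$ of $\chi(a)$ precludes $a=1$), we conclude $g\dpu x=0$.

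The remaining identities $g\cdot a=g$ and $g\cdot x=0$ follow from the mirror computation: apply~\eqref{intercambio . :} with $(u,v)=(g,x)$ (equivalently, work in the opposite $q$-magma coalgebra $\mathcal{H}^{\op}$ of Remark~\ref{opuesto de q magma coalgebra}) to reduce to
\[
x\dpu g\ot(g\cdot a-g)=(a-1)\ot g\cdot x,
\]
and use $\xi^{xg}_{11}\ne 0$—which follows from the right regularity of $\mathcal{H}$ via the dual of the argument in Lemma~\ref{x cdot g}—together with $a\ne 1$ to finish. The only conceptual step is the derivation of the unit laws on group-likes via the idempotent argument in the first paragraph; once these are in hand, \eqref{intercambio . :} combined with the explicit form of $x\cdot g$ and $x\dpu g$ from Lemma~\ref{x cdot g}(2) forces all four identities without further obstacle.
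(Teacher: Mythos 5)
Your proof is correct and takes essentially the same route as the paper: both specialize identity~\eqref{intercambio . :} to $(u,v)=(x,g)$ and $(u,v)=(g,x)$ and use $\lambda^{xg}_{11},\xi^{xg}_{11}\neq 0$ from Lemma~\ref{x cdot g} to force all four equalities, your idempotent argument for $1\cdot g=1\dpu g=1$ only making explicit what the paper's Corollary~\ref{a^r cdot g} calls trivial. The one slip is cosmetic: in the $(u,v)=(x,g)$ computation the identity actually being substituted is $a\cdot g=a$ rather than $a\dpu g=a$, but both are supplied by Lemma~\ref{x cdot g}(1).
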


\begin{proof} By Lemma~\ref{x cdot g}, condition~\eqref{intercambio . :} with $u=x$ and $v=g\in G$ says that
\begin{align*}
g\dpu a\ot \lambda_{10}^{xg}(1-a) + g\dpu a\ot \lambda_{11}^{xg} x + g\dpu x\ot 1 & = g\dpu a\ot x\cdot g + g\dpu x\ot 1\cdot g\\
& = g\dpu x\ot a\cdot g + g\dpu 1\ot x\cdot g\\
& = g\dpu x\ot a + g\ot \lambda_{10}^{xg}(1-a) + g\ot \lambda_{11}^{xg} x.
\end{align*}
Since $\lambda_{11}^{xg}\ne 0$, this implies that $g\dpu a = g$ and $g\dpu x = 0$. The same computation, but with $u=g$ and $v=x$ gives $g\cdot a = g$ and $g\cdot x = 0$.
\end{proof}

\begin{lemma}\label{x cdot x y x dpu x} The equalities $x\cdot x= \lambda_{10}(1-a) + \lambda_{11} x$ and $x\dpu x=\xi_{10}(1-a) + \xi_{11} x$ hold.
\end{lemma}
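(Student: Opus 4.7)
The plan is to follow the strategy of Lemma~\ref{x cdot g}: exploit that $p, d \colon H \otimes H^{\cop} \to H$ are coalgebra maps (Remark~\ref{d y p son morfismos de coalgears}) to compute $\Delta(x \cdot x)$ and $\Delta(x \dpu x)$, and then compare coefficients in the canonical basis.

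For the $\cdot$ case, applying $p \otimes p$ to $\Delta_{H \otimes H^{\cop}}(x \otimes x)$, expanded via $\Delta(x) = x \otimes a + 1 \otimes x$ and $\Delta^{\cop}(x) = a \otimes x + x \otimes 1$, yields
$$
\Delta(x \cdot x) = (x \cdot a) \otimes (a \cdot x) + (x \cdot x) \otimes (a \cdot 1) + (1 \cdot a) \otimes (x \cdot x) + (1 \cdot x) \otimes (x \cdot 1).
$$
By Lemma~\ref{g cdot x y g dpu x} one has $a \cdot x = 1 \cdot x = 0$ and $1 \cdot a = 1$; by Corollary~\ref{a^r cdot g}, also $a \cdot 1 = a$. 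Hence the first and fourth summands vanish and
$$
\Delta(x \cdot x) = (x \cdot x) \otimes a + 1 \otimes (x \cdot x).
$$

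Writing $x \cdot x = \sum_{h \in G,\, 0 \le m < n} \lambda_{hm} h x^m$, expanding each $\Delta(h x^m)$ by Remark~\ref{formula de Delta}, and identifying coefficients in the basis $\{g x^j \otimes g' x^l\}$, I would proceed in three regimes. First, for $m \ge 2$ and $1 \le k \le m - 1$, the basis element $h x^k \otimes h a^k x^{m-k}$ has coefficient $\lambda_{hm} \binom{m}{k}_{\!q}$ on the left and $0$ on the right; since $q = \chi(a)$ has order $n > m$, one has $\binom{m}{k}_{\!q} \ne 0$, forcing $\lambda_{hm} = 0$. Second, for $m = 1$, matching $h \otimes h x$ and $h x \otimes h a$ (from the left) against $1 \otimes h x$ and $h x \otimes a$ (from the right) forces $\lambda_{h 1} = 0$ whenever $h \ne 1$. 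Third, for $m = 0$, the equation $\sum_h \lambda_{h0}\, h \otimes h = \sum_h \lambda_{h0} (h \otimes a + 1 \otimes h)$ gives $\lambda_{h0} = 0$ for $h \notin \{1, a\}$ (using $a \ne 1$, which holds since $\chi(a)$ has order $n > 1$), while reading off the $1 \otimes a$ component yields $\lambda_{10} + \lambda_{a0} = 0$. Combining these, $x \cdot x = \lambda_{10}(1 - a) + \lambda_{11} x$; the argument for $x \dpu x$ is identical, with $d$ in place of $p$.

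The entire argument is essentially bookkeeping, so the main obstacle is just the careful matching of basis elements on both sides of $\Delta(x \cdot x) = (x \cdot x) \otimes a + 1 \otimes (x \cdot x)$. The only substantive input is the nonvanishing $\binom{m}{k}_{\!q} \ne 0$ for $1 \le k \le m - 1 < n - 1$, which follows from $q$ being a primitive $n$-th root of unity, a standing feature of the datum.
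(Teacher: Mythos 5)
Your proof is correct and follows essentially the same route as the paper: apply $p\ot p$ to $\Delta_{H_{\mathcal{D}}\ot H_{\mathcal{D}}^{\cop}}(x\ot x)$, kill the terms containing $a\cdot x$ and $1\cdot x$ via Lemma~\ref{g cdot x y g dpu x}, and compare coefficients in the basis $\{gx^j\ot g'x^l\}$ to force $\lambda_{hm}=0$ for $m\ge 2$, $\lambda_{h1}=0$ for $h\ne 1$, $\lambda_{h0}=0$ for $h\notin\{1,a\}$ and $\lambda_{a0}=-\lambda_{10}$. The only difference is that you make explicit the nonvanishing of $\binom{m}{k}_{\!q}$ for $0<k<m<n$, which the paper leaves implicit.
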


\begin{proof} Since $p\colon H_{\mathcal{D}}\ot H_{\mathcal{D}}^{\cop}\to H_{\mathcal{D}}$ is a coalgebra map and $1\cdot x=0$ and $a\cdot x=0$ (by Lemma~\ref{g cdot x y g dpu x}), we have 
\begin{align*}
\sum_{\substack{h\in G \\ 0\le m< n}} \sum_{0\le k\le m} \lambda_{hm} \binom{m}{k}_{\!q} hx^k\ot ha^kx^{m-k} &= \Delta(x\cdot x)\\
&= x\cdot a\ot a\cdot x + x\cdot x \ot a\cdot 1 + 1\cdot a \ot x\cdot x + 1\cdot x\ot x\cdot 1\\
&= x\cdot x \ot a + 1\ot x\cdot x\\
&=\sum_{\substack{h\in G \\ 0\le m< n}} \lambda_{hm} hx^m \ot a+ 1\ot \sum_{\substack{h\in G \\ 0\le m< n}} \lambda_{hm} hx^m.
\end{align*}	
Hence, $\lambda_{hm}=0$, for $m> 1$;  
\begin{align}
& \sum_{h\in G} \lambda^{x}_{h0} h\ot h= \sum_{h\in G} \lambda_{h0} h\ot a + 1\ot \sum_{h\in G} \lambda^{x}_{h0} h\label{eq7}
\shortintertext{and}
& \sum_{h\in G} \lambda^{x}_{h1} (hx\ot ha + h\ot hx) =\sum_{h\in G} \lambda_{h1} hx\ot a + 1\ot \sum_{h\in G} \lambda_{h1} hx.\label{eq8}
\end{align}
From identity~\eqref{eq7} it follows that $\lambda_{h0}=0$ for all $h\notin \{1,a\}$ and that $ \lambda_{a0}= -\lambda_{10}$; while from identity~\eqref{eq8} it follows that $\lambda_{h1}=0$ if $h\ne 1$. The proof for $\dpu$ is similar.
\end{proof}

\begin{lemma}\label{lambda{x,g dpu a}{10} vs lambda{x,g}{10}} We have $x^r\cdot g = \bigl(\lambda^{xg}_{11}\bigr)^r x^r$ and $x^r\dpu g = \bigl(\xi^{xg}_{11}\bigr)^r x^r$, for all $g\in G$ and $r\in \mathds{N}$.
\end{lemma}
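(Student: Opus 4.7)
I would prove this by induction on $r$, treating only the $\cdot$ case (the $\dpu$ case is parallel via the second identity in~\eqref{condicion q-braza}). For the inductive step, I write $x^r=x\cdot x^{r-1}$ in the algebra, apply the first identity in~\eqref{condicion q-braza} with $h=x$, $k=x^{r-1}$, $l=g$, and use Remark~\ref{formula de Delta} to expand $\De(x^{r-1})$. In the resulting sum
$$
x^r\cdot g=\sum_{k=0}^{r-1}\binom{r-1}{k}_{\!q}\bigl(x\cdot(g\dpu a^kx^{r-1-k})\bigr)(x^k\cdot g),
$$
the factor $g\dpu(a^kx^{r-1-k})$ equals $(g\dpu x^{r-1-k})\dpu a^k$ by the module axiom for $\dpu$; iterating $g\dpu x=0$ from Lemma~\ref{g cdot x y g dpu x} together with the $\mathds{k}$-linearity of $\dpu$ shows this vanishes for $k\le r-2$. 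For $k=r-1$, $g\dpu a^{r-1}=g$ by the argument of Corollary~\ref{a^r cdot g} applied to $\dpu$, so only that term survives and one obtains $x^r\cdot g=(x\cdot g)(x^{r-1}\cdot g)$.

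The main obstacle is the base case $r=1$, which via Lemma~\ref{x cdot g}(2) is equivalent to $\lambda^{xg}_{10}=0$. To establish this, I would exploit that $p\colon H_{\mathcal D}\ot H_{\mathcal D}^{\cop}\to H_{\mathcal D}$ is a coalgebra morphism and compare two expressions for $\De(x^2\cdot g)$. On the one hand, applying the coalgebra morphism property with $\De(x^2)=x^2\ot a^2+(1+q)x\ot ax+1\ot x^2$ and using $a^2\cdot g=a^2$, $1\cdot g=1$, together with $ax\cdot g=a(x\cdot g)$ (derived from the product formula with $h=a$, $k=x$, $l=g$), yields
$$
\De(x^2\cdot g)=y^2\ot a^2+(1+q)\,y\ot ay+1\ot y^2,\qquad y\coloneqq x\cdot g.
$$
On the other hand, the same reduction as in the inductive step gives $x^2\cdot g=y^2$, whence $\De(x^2\cdot g)=\De(y)^2$.

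Equating these two expressions for $\De(y^2)$ and extracting the coefficient of $a\ot a$ in the basis $\{g'x^m\}\ot\{g'x^m\}$ of $H_{\mathcal D}\ot H_{\mathcal D}$, I expect a relation of the form $(1+q)\mu^2=2\mu^2$ with $\mu\coloneqq\lambda^{xg}_{10}$, that is $\mu^2(q-1)=0$; since $q=\chi(a)$ has order $n>1$ we have $q\ne 1$ and hence $\lambda^{xg}_{10}=0$. This closes the induction: by Lemma~\ref{x cdot g}(2) we get $x\cdot g=\lambda^{xg}_{11}x$, so $x^r\cdot g=(\lambda^{xg}_{11})^{r-1}(x\cdot g)x^{r-1}=(\lambda^{xg}_{11})^rx^r$. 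The parallel argument for $\dpu$, using the second identity in~\eqref{condicion q-braza} and the analogue $x^2\dpu g=(x\dpu g)^2$, produces $\xi^{xg}_{10}=0$ and the companion formula $x^r\dpu g=(\xi^{xg}_{11})^r x^r$.
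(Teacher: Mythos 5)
Your proof is correct, and while the inductive step is essentially the paper's (the recursion $x^r\cdot g=(x\cdot g)(x^{r-1}\cdot g)$, obtained from~\eqref{condicion q-braza} together with $g\dpu a^k=g$ and $g\dpu x^m=0$; the paper applies~\eqref{condicion q-braza} with $k=x$ instead of $k=x^{r-1}$, which avoids the $q$-binomial bookkeeping but is otherwise the same idea), your treatment of the crucial base case $\lambda^{xg}_{10}=0$ is genuinely different. The paper evaluates $xa\cdot g$ in two ways: via~\eqref{condicion q-braza} with $k=a$ it equals $(x\cdot g)a$, and via the commutation $xa=qax$ and~\eqref{condicion q-braza} again it equals $qa(x\cdot g)$; comparing the $\mathds{k}[G]$-parts gives $\lambda^{xg}_{10}(a-a^2)=q\lambda^{xg}_{10}(a-a^2)$, hence $\lambda^{xg}_{10}=0$. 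You instead exploit that $p$ is a coalgebra morphism on $x^2\ot g$ and compare $\Delta(x^2\cdot g)=y^2\ot a^2+(1+q)y\ot ay+1\ot y^2$ with $\Delta(y)^2$; I checked that the coefficient of $a\ot a$ is $-(1+q)\mu^2$ on one side and $-2\mu^2$ on the other (using $a\ne 1$, so $a\ot a$ does not collide with $1\ot 1$, $a^2\ot a^2$, or, when $n=2$, with the group-like terms hidden in $x^2=\alpha(a^2-1)$), which indeed yields $\mu^2(q-1)=0$. Your route costs a full expansion of $\Delta(y)^2$ and a coefficient extraction where the paper's is a one-line computation, but it has the mild advantage of not invoking the relation $xa=qax$ directly, only the coalgebra-morphism axiom and the already-established values of $\cdot$ and $\dpu$ on group-likes. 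Two small imprecisions worth fixing: the fact $g\dpu a^{r-1}=g$ follows from Lemma~\ref{g cdot x y g dpu x} and the right $H^{\op}$-module axiom (it is Proposition~\ref{resumen}(4)), not from Corollary~\ref{a^r cdot g}, which concerns $a^r\cdot g$ with $a^r$ in the \emph{left} slot; and note that $x^2\cdot g=y^2$, which you use before the base case is settled, indeed only needs $g\dpu a=g$ and $g\dpu x=0$, so there is no circularity, but this is worth saying explicitly.
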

	
\begin{proof} We prove the first equality and leave the second one to the reader. Using~\eqref{condicion q-braza}, Lemma~\ref{x cdot g}(2), and the facts that $a\cdot g = a$, $g\dpu a = g$, $xq=qax$ and $g\dpu x = 0$, we obtain
\begin{align*}
\lambda^{xg}_{10}(1-a)a + \lambda^{xg}_{11}xa &=(x\cdot (g\dpu a))(a\cdot g)\\
&=xa\cdot g\\
&=qax\cdot g\\
&= q\bigl((a\cdot (g\dpu a))(x\cdot g) + (a\cdot (g\dpu x))(1\cdot g)\bigr)\\
%
%
&= qa \bigl(\lambda^{xg}_{10}(1-a) + \lambda^{xg}_{11}x \bigr)\\
&= q\lambda^{xg}_{10}(1-a)a + \lambda^{xg}_{11}xa.
\end{align*}
Since $q$ is a primitive $n$-th root of $1$, this implies that $\lambda^{xg}_{10} = 0$ and shows that $x\cdot g = \lambda^{xg}_{11} x$. In order to finish the proof it suffices to show that $x^r\cdot g=(x\cdot g)^r$, for all $g\in G$ and $r>0$. We proceed by induction in $r$. When $r=1$, this is trivial, and assuming that it is true for $r$, we have
$$
x^{r+1}\cdot g = (x^r \cdot (g\dpu a))(x\cdot g) + (x^r\cdot (g\dpu x))(1\cdot g)= (x^r\cdot g)(x\cdot g) = (x\cdot g)^{r+1},
$$
where we have used Lemmas~\ref{x cdot g}(1) and~\ref{g cdot x y g dpu x}. 
\end{proof}
	
\begin{lemma}\label{calculo de lambda^{xa}_{10}, etc} Assume that $\alpha(a^n-1)\ne 0$. Then $\bigl(\lambda^{xg}_{11}\bigr)^n = \bigl(\xi^{xg}_{11}\bigr)^n=1$, for all $g\in G$.
\end{lemma}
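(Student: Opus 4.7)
The plan is to exploit the hypothesis $\alpha(a^n-1)\ne 0$ to turn the relation $x^n=\alpha(a^n-1)$ into a nontrivial numerical identity by evaluating both sides of this equality under the action $(-)\cdot g$ in two different ways.

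First, I would apply Lemma~\ref{lambda{x,g dpu a}{10} vs lambda{x,g}{10}} with $r=n$ to get
\[
x^n\cdot g = \bigl(\lambda^{xg}_{11}\bigr)^n x^n.
\]
Next, I would use that $x^n = \alpha(a^n-1)$ in $H_{\mathcal{D}}$, together with the $\mathds{k}$-bilinearity of $\cdot$ (which holds because $p\colon H_{\mathcal{D}}\ot H_{\mathcal{D}}^{\cop}\to H_{\mathcal{D}}$ is a $\mathds{k}$-linear coalgebra map, and because $(H_{\mathcal{D}},\cdot)$ is a right $H_{\mathcal{D}}^{\op}$-module by the definition of Hopf $q$-brace), to write
\[
x^n\cdot g = \alpha(a^n-1)\cdot g = \alpha\bigl(a^n\cdot g - 1\cdot g\bigr).
\]
By Corollary~\ref{a^r cdot g} (applied with $r=n$ and $r=0$), $a^n\cdot g = a^n$ and $1\cdot g = 1$, so this last expression equals $\alpha(a^n-1) = x^n$.

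Combining the two computations yields $\bigl(\lambda^{xg}_{11}\bigr)^n x^n = x^n$. Since by hypothesis $x^n = \alpha(a^n-1)\ne 0$, we can cancel and conclude $\bigl(\lambda^{xg}_{11}\bigr)^n = 1$. The argument for $\bigl(\xi^{xg}_{11}\bigr)^n = 1$ is entirely analogous, replacing $\cdot$ by $\dpu$, $\lambda$ by $\xi$, and using the second halves of Lemma~\ref{lambda{x,g dpu a}{10} vs lambda{x,g}{10}} and Corollary~\ref{a^r cdot g}. There is no real obstacle here; the only delicate point is to keep track that $\cdot$ genuinely is $\mathds{k}$-bilinear (so that we may distribute it across $\alpha(a^n-1)$), which is automatic from the module structure stipulated in Definition~\ref{q-brace}.
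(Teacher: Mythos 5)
Your proposal is correct and follows exactly the paper's own argument: the paper likewise computes $\alpha(a^n-1)=\alpha(a^n-1)\cdot g=x^n\cdot g=\bigl(\lambda^{xg}_{11}\bigr)^n x^n=\bigl(\lambda^{xg}_{11}\bigr)^n\alpha(a^n-1)$ using Corollary~\ref{a^r cdot g} and Lemma~\ref{lambda{x,g dpu a}{10} vs lambda{x,g}{10}}, and then cancels the nonzero element $\alpha(a^n-1)$. The only difference is presentational: you make explicit the linearity justification for distributing $\cdot\, g$ over $\alpha(a^n-1)$, which the paper leaves implicit.
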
	
	
\begin{proof} By Corollary~\ref{a^r cdot g} and Lemma~\ref{lambda{x,g dpu a}{10} vs lambda{x,g}{10}}, we have
$$
\alpha(a^n-1) = \alpha(a^n-1)\cdot g = x^n\cdot g = \bigl(\lambda^{xg}_{11}\bigr)^n x^n = \bigl(\lambda^{xg}_{11}\bigr)^n \alpha(a^n-1).
$$
So, $\bigl(\lambda^{xg}_{11}\bigr)^n = 1$. The proof of the remaining assertion is similar.
\end{proof}

\begin{lemma}\label{lambda x {11}=xi x {11}=0} The equalities $x\cdot x = \lambda_{10}(1-a)$ and $x\dpu x = \xi_{10}(1-a)$ hold. 
\end{lemma}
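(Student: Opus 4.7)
The plan is to invoke the coalgebra compatibility~\eqref{intercambio . :} with $u=v=x$. Since $\Delta(x)=x\ot a+1\ot x$, each side of that identity expands as a sum of four tensor products in $H_{\mathcal{D}}\ot H_{\mathcal{D}}$. Using Lemma~\ref{g cdot x y g dpu x} (which gives $1\cdot a=1\dpu a=1$, together with $a\cdot x=a\dpu x=0$ and $1\cdot x=1\dpu x=0$), the right $H^{\op}$-module unit laws $h\cdot 1=h\dpu 1=h$, Lemma~\ref{lambda{x,g dpu a}{10} vs lambda{x,g}{10}} (which gives $x\cdot a=\lambda^{xa}_{11}x$ and $x\dpu a=\xi^{xa}_{11}x$), and Lemma~\ref{x cdot x y x dpu x} (which expresses $x\cdot x$ and $x\dpu x$ in terms of $\lambda_{10},\lambda_{11},\xi_{10},\xi_{11}$), each summand can be rewritten explicitly in the standard basis $\{gx^{i}\ot hx^{j}\}$ of $H_{\mathcal{D}}\ot H_{\mathcal{D}}$.

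I would then compare coefficients of $1\ot x$ and $x\ot 1$ on the two sides. On the left-hand side, only the summand $(1\dpu a)\ot(x\cdot x)=1\ot\bigl(\lambda_{10}(1-a)+\lambda_{11}x\bigr)$ contributes to $1\ot x$ (with coefficient $\lambda_{11}$), and only the summand $(x\dpu x)\ot(1\cdot a)=\bigl(\xi_{10}(1-a)+\xi_{11}x\bigr)\ot 1$ contributes to $x\ot 1$ (with coefficient $\xi_{11}$); the remaining two summands land on $x\ot x$ or vanish. On the right-hand side, the analogous summands $(a\dpu 1)\ot(x\cdot x)$ and $(x\dpu x)\ot(a\cdot 1)$ carry $a$, not $1$, in the outer tensor slot, and therefore fall on $a\ot x$ and $x\ot a$ instead of $1\ot x$ and $x\ot 1$; this is where the hypothesis $n>1$ intervenes, since it guarantees $a\ne 1$ and so keeps these basis vectors genuinely distinct. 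Equating coefficients thus forces $\lambda_{11}=0$ and $\xi_{11}=0$, and combining with Lemma~\ref{x cdot x y x dpu x} yields the stated formulas.

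The only real obstacle is bookkeeping: faithfully tracking the eight summands across the identity and recognising that the equalities $1\cdot a=1\dpu a=1$ from Lemma~\ref{g cdot x y g dpu x} are precisely what places the $\lambda_{11}$ and $\xi_{11}$ contributions on the distinguished basis elements $1\ot x$ and $x\ot 1$, disjoint from the $a\ot x$ and $x\ot a$ contributions on the other side.
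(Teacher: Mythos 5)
Your argument is correct, but it is not the route the paper takes. The paper proves this lemma by first showing inductively, via the right $H_{\mathcal{D}}^{\op}$-module axiom, that $x\cdot x^r=\lambda_{11}^{r-1}\bigl(\lambda_{10}(1-a)+\lambda_{11}x\bigr)$, then evaluating at $r=n$ using the algebra relation $x^n=\alpha(a^n-1)$: the right-hand side $x\cdot\alpha(a^n-1)$ vanishes (trivially if $\alpha(a^n-1)=0$, and by Lemma~\ref{calculo de lambda^{xa}_{10}, etc} otherwise), and comparing the coefficient of $x$ gives $\lambda_{11}^n=0$. Your proof instead extracts $\lambda_{11}=\xi_{11}=0$ from the $q$-magma coalgebra symmetry~\eqref{intercambio . :} with $u=v=x$, by matching coefficients of $1\ot x$ and $x\ot 1$ and observing that the corresponding terms on the other side are shifted onto $a\ot x$ and $x\ot a$ because $1\dpu a=1\cdot a=1$ while $a\dpu 1=a\cdot 1=a$, with $a\ne 1$ since $\chi(a)$ has order $n>1$. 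I checked the eight summands and the coefficient extraction: they are as you describe, and the prerequisites you cite (in particular $x\cdot a=\lambda^{xa}_{11}x$ and $x\dpu a=\xi^{xa}_{11}x$, needed so that the term $x\dpu a\ot x\cdot a$ lands entirely on $x\ot x$) are available at this point. Your approach is arguably cleaner: it avoids the relation $x^n=\alpha(a^n-1)$ and the case split on whether $\alpha(a^n-1)$ vanishes, and it is essentially the same computation the paper performs later in Lemma~\ref{restriccion}; carried out with the general form of $x\cdot x$ and $x\dpu x$ and pushed to the remaining basis elements, it delivers that lemma ($\lambda_{11}^{xa}\xi_{11}^{xa}=1$ and $\xi_{10}=-\lambda_{10}$) in the same stroke. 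What the paper's route buys in exchange is the intermediate formula for $x\cdot x^r$ and the explicit interaction with the defining relation of $H_{\mathcal{D}}$, which it reuses elsewhere.
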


\begin{proof} An inductive argument using Lemma~\ref{g cdot x y g dpu x} and that $H_{\mathcal{D}}$ is a right $H_{\mathcal{D}}^{\op}$-module via $\cdot$, proves that 
$$
x\cdot x^r=\bigl(\lambda_{11}\bigr)^{r-1}\bigl(\lambda_{10}(1-a) + \lambda_{11} x\bigr)\quad\text{for all $r\ge 0$.}
$$
Thus, 
$$
\bigl(\lambda_{11}\bigr)^{n-1}\bigl(\lambda_{10}(1-a) + \lambda_{11} x\bigr)= x\cdot x^n=x\cdot \alpha(a^n - 1) = 0,	
$$
where the last equality is trivial if $\alpha(a^n - 1) = 0$, and it follows from Lemma~\ref{x cdot g}(3) and Lemma~\ref{calculo de lambda^{xa}_{10}, etc}, otherwise. Therefore, $\lambda_{11}=0$, as desired. The proof of the remaining assertion is similar.
\end{proof}

\begin{proposition}\label{resumen} The following facts hold:
\begin{enumerate}[itemsep=0.7ex, topsep=1.0ex, label={\emph{\arabic*)}}]

\item $x\cdot x = \lambda_{10} (1-a)$ and $x\dpu x = \xi_{10} (1-a)$.

\item $a^r\cdot g = a^r\dpu g = a^r$, for all $g\in G$ and $r\in \mathds{N}_0$. 

\item $g\cdot hx^r = g\dpu hx^r=0$, for all $h,g\in G$ and $r\in \mathds{N}$.

\item $g\cdot a^r = g\dpu a^r=g$, for all $g\in G$ and $r\in \mathds{N}_0$.

\item $hx^r\cdot g = (\lambda^{xg}_{11})^r (h\cdot g) x^r$ and $hx^r\dpu g = (\xi^{xg}_{11})^r (h\dpu g) x^r$, for all $h,g\in G$ and $r\in \mathds{N}$.

\item $hx^r\cdot \alpha g(a^n-1) = 0$ and $hx^r\dpu \alpha g(a^n-1) = 0$, for all $h,g\in G$ and $0\le r<n$.

\end{enumerate}
\end{proposition}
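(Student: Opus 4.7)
The plan is to deduce the six items in order, each from the previous ones together with the earlier lemmas of this section. Items~(1) and~(2) are nothing more than Lemma~\ref{lambda x {11}=xi x {11}=0} and Corollary~\ref{a^r cdot g} respectively. For~(4) I induct on $r$ using the right $H_{\mathcal{D}}^{\op}$-module identity $(u\cdot v)\cdot w = u\cdot(wv)$, with base case $g\cdot a = g\dpu a = g$ from Lemma~\ref{g cdot x y g dpu x}. Item~(3) follows by the same induction applied first to show $g\cdot x^r = g\dpu x^r = 0$ for $r\ge 1$, starting from $g\cdot x = g\dpu x = 0$ (also Lemma~\ref{g cdot x y g dpu x}), and then by one more application of the module identity giving $g\cdot hx^r = (g\cdot x^r)\cdot h = 0$, and analogously for $\dpu$.

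For item~(5) I apply the first identity of~\eqref{condicion q-braza} with $k=x^r$ and $l=g$. Using Remark~\ref{formula de Delta} and $\Delta(g) = g\ot g$, this expands as
\[
hx^r\cdot g = \sum_{j=0}^r \binom{r}{j}_{\!q} \bigl(h\cdot(g\dpu a^j x^{r-j})\bigr)(x^j\cdot g).
\]
For $j<r$ the factor $g\dpu a^j x^{r-j}$ vanishes by item~(3), so only the term $j=r$ survives; Lemma~\ref{lambda{x,g dpu a}{10} vs lambda{x,g}{10}} then replaces $x^r\cdot g$ by $(\lambda^{xg}_{11})^r x^r$ to give the stated formula. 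The computation for $\dpu$ is strictly parallel.

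Item~(6) is the most delicate step. It is trivial when $\alpha(a^n-1)=0$, so assume $\alpha(a^n-1)\ne 0$. Then by item~(5) and the simplification $h\cdot ga^n = (h\cdot a^n)\cdot g = h\cdot g$ coming from~(4), the element $hx^r\cdot \alpha g(a^n-1) = \alpha(hx^r\cdot ga^n - hx^r\cdot g)$ reduces to $\alpha\bigl((\lambda^{x,ga^n}_{11})^r - (\lambda^{xg}_{11})^r\bigr)(h\cdot g)x^r$. Iterating Lemma~\ref{x cdot g}(3) yields $\lambda^{x,ga^n}_{11} = \lambda^{xg}_{11}(\lambda^{xa}_{11})^n$, and Lemma~\ref{calculo de lambda^{xa}_{10}, etc} forces $(\lambda^{xa}_{11})^n = 1$; the bracket therefore vanishes, proving~(6), with the argument for $\dpu$ being identical. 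The only real obstacle in the whole proposition is this final compatibility check with the defining relation $x^n=\alpha(a^n-1)$ of $H_{\mathcal{D}}$, which is precisely why Lemma~\ref{calculo de lambda^{xa}_{10}, etc} was proved in the first place.
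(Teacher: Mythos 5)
Your proof is correct and follows essentially the same route as the paper: items~1)--4) are reduced to Lemma~\ref{lambda x {11}=xi x {11}=0}, Corollary~\ref{a^r cdot g}, Lemma~\ref{g cdot x y g dpu x} and the right $H_{\mathcal{D}}^{\op}$-module structure, item~5) is the same expansion of~\eqref{condicion q-braza} killed by item~3) except for the top term, and item~6) rests on the same two ingredients (Lemma~\ref{x cdot g}(3) and Lemma~\ref{calculo de lambda^{xa}_{10}, etc}), merely packaged as a cancellation of $(\lambda^{x,ga^n}_{11})^r-(\lambda^{xg}_{11})^r$ rather than via the paper's $hx^i\cdot(a^n-1)=0$.
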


\begin{proof} Item~1) is Lemma~\ref{lambda x {11}=xi x {11}=0} and item~2) is Corollary~\ref{a^r cdot g}. Items~3) and~4) follow from Lemma~\ref{g cdot x y g dpu x} and the fact that $H_{\mathcal{D}}$ is an $H_{\mathcal{D}}^{\op}$-module both via $\cdot$ and via $\dpu$. We next prove the first equality in item~5) and leave the second one to the reader. By the first equality in~\eqref{condicion q-braza} and items~3) and~4), we have
$$
hx^r\cdot g= \sum_{0\le k\le r} \binom{r}{k}_{\!q} (h\cdot (g\dpu a^kx^{r-k}))(x^k\cdot g) = (h\cdot (g \dpu a^r))(x^r\cdot g) = (h\cdot g)(x^r\cdot g) = (\lambda^{xg}_{11})^r (h\cdot g) x^r,
$$
as desired. We finally prove the first equality in item~6) and leave the second one, which is similar, to the reader. If $\alpha (a^n-1) = 0$, this is trivial. Assume that  $\alpha (a^n-1) \ne 0$. Then, by items~4) and~5) and Lemmas~\ref{x cdot g}(3) and~\ref{calculo de lambda^{xa}_{10}, etc}, we have $hx^i\cdot a^n = (\lambda^{xa}_{11})^{ni} (h\cdot a^n) x^i = hx^i$, and so
$$
hx^i\cdot \alpha g(a^n-1) = \alpha (hx^i\cdot (a^n-1))\cdot g = 0, 
$$
as we want.
\end{proof}

\begin{lemma}\label{posibilidades} The following facts hold:
		
\begin{enumerate}[itemsep=0.7ex, topsep=1.0ex, label={\emph{\arabic*)}}]
			
\item $x\cdot x=0$ or $x\dpu g=\chi(g)x$ for all $g\in G$.
			
\item $x\dpu x=0$ or $x\cdot g=\chi(g)x$ for all $g\in G$.
\end{enumerate}
		
\end{lemma}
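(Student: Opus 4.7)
My plan is to prove~(1); part~(2) will follow by a symmetric argument in which every role played by $\cdot$ is taken over by $\dpu$, and the first equation in~\eqref{condicion q-braza} is replaced by the second. Assume therefore that $x\cdot x\ne 0$, so that by Lemma~\ref{x cdot x y x dpu x} we have $\lambda_{10}\ne 0$ (and in particular $a\ne 1$); the goal is to show that $\xi^{xg}_{10}=0$ and $\xi^{xg}_{11}=\chi(g)$ for every $g\in G$.

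The first step is to exploit the exchange identity~\eqref{intercambio . :} with $u=v=x$. Using $\Delta(x)=x\ot a+1\ot x$ together with Lemma~\ref{g cdot x y g dpu x} (which kills the Sweedler summands containing $g\cdot x$, $g\dpu x$, $a\cdot x$ or $a\dpu x$), the identity collapses to
\[
(x\dpu a)\ot(x\cdot a) + (x\dpu x)\ot 1 + 1\ot(x\cdot x) = (x\dpu x)\ot a + a\ot(x\cdot x) + x\ot x.
\]
Substituting the expressions of Lemmas~\ref{x cdot g}(2) and~\ref{x cdot x y x dpu x} and comparing coefficients in the basis $\{gx^i\ot hx^j\}$ of $H_{\mathcal{D}}\ot H_{\mathcal{D}}$, the coefficient of $x\ot x$ yields $\xi^{xa}_{11}\lambda^{xa}_{11}=1$, and the coefficients of $1\ot x$ and of $x\ot 1$ then force $\xi^{xa}_{10}=\lambda^{xa}_{10}=0$. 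In particular $x\cdot a=\lambda^{xa}_{11}x$ and $x\dpu a=\xi^{xa}_{11}x$, with both constants nonzero.

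Next I would play the algebra relation $xg=\chi(g)gx$ against~\eqref{condicion q-braza}. Expanding $gx\cdot x$ via $hk\cdot l=(h\cdot(l_{(1)}\dpu k_{(2)}))(k_{(1)}\cdot l_{(2)})$, three of the four Sweedler summands vanish thanks to $g\cdot x=0$ and $g\cdot(c(1-a))=0$ for every scalar $c$, leaving $gx\cdot x=\lambda_{10}\,g(1-a)$. A similar expansion of $xg\cdot x$, now incorporating the Step-1 simplification $x\cdot a=\lambda^{xa}_{11}x$, reduces to
\[
xg\cdot x=\xi^{xg}_{10}(1-\lambda^{xa}_{11})\chi(g)\,gx+\xi^{xg}_{11}\lambda_{10}\,g(1-a).
\]
Comparing the coefficients of $gx$ and of $g(1-a)$ in the equality $xg\cdot x=\chi(g)\,(gx\cdot x)$ then produces the two key relations
\[
\xi^{xg}_{10}(1-\lambda^{xa}_{11})=0,\qquad \xi^{xg}_{11}\lambda_{10}=\chi(g)\lambda_{10}.
\]

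Finally I would specialize the second of these at $g=a$: since $\lambda_{10}\ne 0$ we obtain $\xi^{xa}_{11}=\chi(a)=q$, and then $\xi^{xa}_{11}\lambda^{xa}_{11}=1$ forces $\lambda^{xa}_{11}=q^{-1}$. Because $n>1$ ensures $q\ne 1$, we have $1-\lambda^{xa}_{11}\ne 0$; the first key relation therefore gives $\xi^{xg}_{10}=0$ for every $g\in G$, and the second then gives $\xi^{xg}_{11}=\chi(g)$, so $x\dpu g=\chi(g)x$. The only step I expect to be slightly tedious is the coefficient bookkeeping in Step~1; everything afterwards is a short computation driven by the group relation $xg=\chi(g)gx$ and the inequality $q\ne 1$.
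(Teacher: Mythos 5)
Your argument is correct in substance, and its decisive step --- expanding $gx\cdot x$ and $xg\cdot x$ via the first identity in~\eqref{condicion q-braza} and comparing the results through $xg=\chi(g)gx$ --- is exactly the computation in the paper's proof. Two remarks. First, part of your work is redundant: Lemma~\ref{lambda{x,g dpu a}{10} vs lambda{x,g}{10}} already gives $x\cdot g=\lambda^{xg}_{11}x$ and $x\dpu g=\xi^{xg}_{11}x$ for \emph{every} $g\in G$, i.e.\ $\lambda^{xg}_{10}=\xi^{xg}_{10}=0$; this makes your Step~1 (which essentially reproves part of Lemma~\ref{restriccion}) and your final deduction of $\xi^{xg}_{10}=0$ unnecessary. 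Second, a citation needs fixing: Lemma~\ref{x cdot x y x dpu x} only yields $x\cdot x=\lambda_{10}(1-a)+\lambda_{11}x$, and with that version the coefficient of $1\ot x$ in your Step~1 reads $\xi^{xa}_{10}\lambda^{xa}_{11}+\lambda_{11}=0$ (which does not force $\xi^{xa}_{10}=0$), the expansion of $gx\cdot x$ acquires an extra summand $\lambda_{11}gx$, and $x\cdot x\neq 0$ no longer implies $\lambda_{10}\neq 0$. What you actually need is Lemma~\ref{lambda x {11}=xi x {11}=0}, which gives $x\cdot x=\lambda_{10}(1-a)$ and is available before the present lemma; with that citation everything you wrote goes through.
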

	
\begin{proof} By Lemmas~\ref{x cdot g} and~\ref{lambda{x,g dpu a}{10} vs lambda{x,g}{10}}, there exist $\mathds{k}^{\times}$-valued characters $g\mapsto \lambda^{xg}_{11}$ and $g\mapsto \xi^{xg}_{11}$, such that  
$$
x\cdot g = \lambda^{xg}_{11}x\quad\text{and}\quad x\dpu g = \xi^{xg}_{11}x. 
$$
Moreover, by Lemma~\ref{g cdot x y g dpu x}, we have $g\cdot x=g\dpu x=0$ and $g\cdot a=g\dpu a=g$, for all $g\in G$. On the other hand, by Lemma~\ref{lambda x {11}=xi x {11}=0}, we have
$$
x\cdot x=\lambda_{10}(1-a) \quad\text{and}\quad x\dpu x=\xi_{10}(1-a).
$$
Using these facts and condition~\eqref{condicion q-braza}, we obtain
$$
gx\cdot x = (g\cdot (x \dpu a))( x \cdot  a) + (g\cdot (1 \dpu a))(x \cdot x) + (g\cdot (x \dpu x))(1 \cdot a) + (g\cdot (1 \dpu x))(1 \cdot x)= \lambda_{10}(1-a)g
$$
and  
$$
xg\cdot x = (x\cdot ( x \dpu g) )(g \cdot a) +  (x\cdot ( 1 \dpu g) )(g \cdot x)= \xi^{xg}_{11}(x\cdot  x)g= \xi^{xg}_{11} \lambda_{10}(1-a)g.
$$
Since $\chi(g)gx = xg$, this implies the first assertion. The second assertion can be proved in a similar way.
\end{proof}

\begin{lemma}\label{restriccion} The equalities $\lambda_{11}^{xa}\xi_{11}^{xa} = 1$ and $\xi_{10}=-\lambda_{10}$ holds. 
\end{lemma}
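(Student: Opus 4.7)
The plan is to apply the coalgebra intertwining identity \eqref{intercambio . :} with the choice $u=v=x$, since both targets $\lambda_{11}^{xa}\xi_{11}^{xa}$ and $\xi_{10}+\lambda_{10}$ involve mixed $\cdot / \dpu$ information, and the $\bm{x}\otimes\bm{x}$ cross-term produced by comultiplying $x$ twice is precisely what mixes them. Using $\Delta(x)=x\otimes a+1\otimes x$, each side of \eqref{intercambio . :} expands into four summands. All of the needed values have already been pinned down by the earlier lemmas of this section: by Lemmas~\ref{x cdot g} and~\ref{lambda{x,g dpu a}{10} vs lambda{x,g}{10}} we have $x\cdot a=\lambda_{11}^{xa}x$ and $x\dpu a=\xi_{11}^{xa}x$; by Lemma~\ref{lambda x {11}=xi x {11}=0}, $x\cdot x=\lambda_{10}(1-a)$ and $x\dpu x=\xi_{10}(1-a)$; by Lemma~\ref{g cdot x y g dpu x}, $a\cdot x=a\dpu x=0$; and the module/coalgebra-map axioms give $1\cdot a=1\dpu a=1$, $1\cdot x=1\dpu x=0$, $x\cdot 1=x\dpu 1=x$.

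Substituting these, the left-hand side of \eqref{intercambio . :} reduces to
\[
\lambda_{11}^{xa}\xi_{11}^{xa}\,x\otimes x+\xi_{10}(1-a)\otimes 1+1\otimes\lambda_{10}(1-a),
\]
while the right-hand side reduces to
\[
x\otimes x+a\otimes\lambda_{10}(1-a)+\xi_{10}(1-a)\otimes a.
\]
Taking the difference and combining the group-algebra terms gives
\[
\bigl(\lambda_{11}^{xa}\xi_{11}^{xa}-1\bigr)\,x\otimes x+(\xi_{10}+\lambda_{10})(1-a)\otimes(1-a)=0.
\]

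Since $\{gx^m:g\in G,\;0\le m<n\}$ is a basis of $H_{\mathcal{D}}$, the tensors $x\otimes x$ and $(1-a)\otimes(1-a)$ lie in complementary summands of the basis of $H_{\mathcal{D}}\otimes H_{\mathcal{D}}$ (one in degree $(1,1)$ in $x$, the other purely in $\mathds{k}[G]\otimes\mathds{k}[G]$), so the two coefficients must vanish separately. The $x\otimes x$ coefficient yields $\lambda_{11}^{xa}\xi_{11}^{xa}=1$. For the second equality I need $(1-a)\otimes(1-a)\ne 0$, i.e.\ $a\ne 1$; this is guaranteed because $q=\chi(a)$ has order $n>1$ by hypothesis, so $\chi(a)\ne 1$ and hence $a\ne 1$. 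Therefore $\xi_{10}+\lambda_{10}=0$.

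The only real obstacle is bookkeeping: correctly enumerating the four cross-pairings of $\Delta(x)\otimes\Delta(x)$ on each side of \eqref{intercambio . :} and keeping straight which factor of each $u_{(i)}$ or $v_{(j)}$ pairs with which. No further computation or case analysis is needed.
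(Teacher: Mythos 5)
Your proof is correct and follows essentially the same route as the paper: both apply identity~\eqref{intercambio . :} with $u=v=x$, substitute the values established in Lemmas~\ref{x cdot g}, \ref{g cdot x y g dpu x}, \ref{lambda{x,g dpu a}{10} vs lambda{x,g}{10}} and~\ref{lambda x {11}=xi x {11}=0}, and compare coefficients of $x\ot x$ and of the group-algebra part. Your explicit justification that $(1-a)\ot(1-a)\ne 0$ and that the two tensors are linearly independent is a small (welcome) elaboration of what the paper leaves as ``the results follow immediately.''
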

	
\begin{proof} By Proposition~\ref{resumen} and Condition~\eqref{intercambio . :} with $u=v=x$, we have 
\begin{align*}
a\ot \lambda_{10} (1-a) + \xi_{10} (1-a)\ot a + x\ot x & = a\ot x\cdot x + x\dpu x\ot a + x\ot x\\
& = a\dpu x\ot a\cdot x + a\dpu 1\ot x\cdot x + x\dpu x\ot a\cdot 1 + x\dpu 1\ot x\cdot 1\\ 
& = x\dpu a\ot x\cdot a + x\dpu x\ot 1\cdot a + 1\dpu a\ot x\cdot x + 1\dpu x\ot 1\cdot x\\ 
& = \lambda_{11}^{xa}\xi_{11}^{xa} x\ot x + \xi_{10} (1-a)\ot 1 + 1\ot \lambda_{10} (1-a).
\end{align*}
The results follow immediately from this fact.
\end{proof}

\section[Classification of Hopf \texorpdfstring{$q$}{q}-braces on rank one pointed Hopf algebras]{Classification of Hopf $\mathbf{q}$-braces on rank one pointed Hopf algebras}\label{Section 3}
In this section $\mathcal{D}=(G,\chi,a,\alpha)$ is a datum and $H_{\mathcal{D}}$ is the rank one pointed Hopf algebra associated with $\mathcal{D}$. By~Pro\-position~\ref{resumen}(1) and Lemmas~\ref{posibilidades} and~\ref{restriccion}, in order to classify all the Hopf $q$-braces $\mathcal{H} = (H_{\mathcal{D}},\cdot,\dpu)$, it suffices to consider the cases $x\cdot x = x\dpu x = 0$ and $x\cdot x = - x\dpu x\ne 0$. In the proof of Theorems~\ref{clasificacion 1} and~\ref{n=2} we sometimes will use, without explicit mention, the results of Section~\ref{section 2}, mainly Proposition~\ref{resumen}. We also will use that $l\dpu ha^j = l\dpu h$ and $l\cdot ha^{\jmath} = l\cdot h$, for all $l,h\in G$ and $\jmath\in \mathds{N}$.
	
\begin{remark} Let $(G,\cdot,\dpu)$ be a $q$-brace and let $a\in G$. By condi\-tion~\eqref{condicion q-braza}, if $a\cdot g = a\dpu g = a$ and $g\cdot a = g\dpu a = g$, for all $g\in G$, then $ha\cdot g = (h\cdot g)a$ and $ha\dpu g = (h\dpu g)a$, for all $h,g\in G$. We will use freely this fact.  
\end{remark}

\begin{theorem}\label{clasificacion 1} Let $\mathcal{H}=(H_{\mathcal{D}},\cdot,\dpu)$ be a Hopf $q$-brace structure on $H_{\mathcal{D}}$. Assume that $x\cdot x=x\dpu x=0$.~Then,~$\mathcal{H}$ induces by restriction a $q$-brace structure on $G$ such that $a\cdot g=a\dpu g=a$ and $g\cdot a=g\dpu a=g$, for all $g\in G$. Also, there exist $\mathds{k}^{\times}$-valued characters $\lambda$ and $\xi$, of~$G$, satisfying
\begin{enumerate}[itemsep=0.7ex, topsep=1.0ex, label={\emph{\arabic*)}}]
    		
\item $\lambda(a)=\xi^{-1}(a)$,
    		
\item $\lambda(h(l\dpu h)) = \lambda(l(h\cdot l))$ for all $h,l\in G$,
    		
\item $\lambda(h)\xi(l\cdot h)= \xi(l)\lambda(h\dpu l)$ for all $h,l\in G$,
    		
\item $\xi(l(h\cdot l))=\xi(h(l\dpu h))$ for all $h,l\in G$,
    		
\item $\chi(h)\lambda(l)=\lambda(l\dpu h)\chi(h\cdot l)$ for all $h,l\in G$,
    		
\item $\chi(h)\xi(l)=\xi(l\cdot  h)\chi(h\dpu l)$ for all $h,l\in G$,
    		
\end{enumerate}
such that, for all $h,g\in G$ and $\jmath,\jmath'\in \mathds{N}_0$,  
\begin{equation}\label{cdot y dpu 1}
h x^{\jmath}\cdot g x^{\jmath'}= \begin{cases} \lambda^{\jmath}(g) (h\cdot g) x^{\jmath} &\text{if $\jmath'=0$,}\\0 & \text{otherwise,}\end{cases} \qquad\text{and}\qquad h x^{\jmath}\dpu g x^{\jmath'}= \begin{cases} \xi^{\jmath}(g) (h\dpu g) x^{\jmath} &\text{if $j'=0$,}\\ 0 & \text{otherwise.} \end{cases}
\end{equation}
Conversely, given a $q$-brace structure on $G$ such that $a\cdot g=a\dpu g=a$ and $g\cdot a=g\dpu a=g$, for all $g\in G$, and $\mathds{k}^{\times}$-valued characters $\lambda$ and $\xi$, of $G$, satisfying items~1)--6), the Hopf algebra $H_{\mathcal{D}}$ is a Hopf $q$-brace via the formulas~\eqref{cdot y dpu 1}. 
\end{theorem}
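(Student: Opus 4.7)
I would begin by setting $\lambda(g)\coloneqq\lambda^{xg}_{11}$ and $\xi(g)\coloneqq\xi^{xg}_{11}$ for $g\in G$; by Lemma~\ref{x cdot g}(3) these are $\mathds{k}^{\times}$-valued characters on $G$. Since $\cdot$ and $\dpu$ send grouplikes to grouplikes, their restrictions endow $G$ with a $q$-brace, and the fixed-point properties $a\cdot g = a\dpu g = a$ and $g\cdot a = g\dpu a = g$ are already recorded in Corollary~\ref{a^r cdot g} and Proposition~\ref{resumen}(4). Item~(1) is exactly Lemma~\ref{restriccion}. To establish the formulas~\eqref{cdot y dpu 1} on a general basis pair $hx^{\jmath}\ot gx^{\jmath'}$, the case $\jmath'=0$ is Proposition~\ref{resumen}(5) combined with the fixed-point behavior of $a$; for $\jmath'\ge 1$ I would use the right $H_{\mathcal{D}}^{\op}$-module structure to rewrite $hx^{\jmath}\cdot gx^{\jmath'} = (hx^{\jmath}\cdot x^{\jmath'})\cdot g$, then apply~\eqref{condicion q-braza} with $k=l=x$ inductively on $\jmath'$, using the hypothesis $x\cdot x=0$ together with Lemma~\ref{g cdot x y g dpu x} to force $hx^{\jmath}\cdot x^{\jmath'}=0$. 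The argument for $\dpu$ is symmetric.

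\textbf{Identities (2)--(6).} For items~(2), (3), (4) I would specialize the $q$-cycle coalgebra axioms~\eqref{1 de def: q cycle coalgebra}, \eqref{2 de def: q cycle coalgebra}, \eqref{3 de def: q cycle coalgebra} to $u=x$ and $v=h$, $w=l\in G$. Because $h,l$ are grouplike each identity collapses to a scalar equation in $\mathds{k} x$ whose two sides are products of values of $\lambda$ and $\xi$, and the character property of $\lambda,\xi$ converts these into the stated multiplicative identities on $G$. For items~(5) and~(6) I would compute $xh\cdot l$ in two ways: via the commutation $xh=\chi(h)hx$ together with the $\jmath'=0$ case of~\eqref{cdot y dpu 1} it equals $\chi(h)\lambda(l)(h\cdot l)x$, whereas via the first identity of~\eqref{condicion q-braza} it equals $\lambda(l\dpu h)\chi(h\cdot l)(h\cdot l)x$. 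Equating the coefficients of $(h\cdot l)x$ gives item~(5); the analogous computation for $xh\dpu l$ gives item~(6).

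\textbf{Converse.} Given a $q$-brace on $G$ with the stated fixed-point properties and characters $\lambda,\xi$ satisfying (1)--(6), I define $\cdot$ and $\dpu$ on $H_{\mathcal{D}}\ot H_{\mathcal{D}}$ by~\eqref{cdot y dpu 1}. The verifications required are: compatibility with the defining relation $xg=\chi(g)gx$, guaranteed by items~(5) and~(6); compatibility with $x^n=\alpha(a^n-1)$, which is trivial when $\alpha(a^n-1)=0$ and otherwise follows from the datum condition $\chi^n=1$ combined with raising item~(5) to the $n$-th power; that $p$ and $d$ are coalgebra maps $H_{\mathcal{D}}\ot H_{\mathcal{D}}^{\cop}\to H_{\mathcal{D}}$, by a direct computation via Remark~\ref{formula de Delta} using that $\lambda,\xi$ are characters; and finally the interchange~\eqref{intercambio . :}, the $q$-cycle axioms~\eqref{1 de def: q cycle coalgebra}--\eqref{3 de def: q cycle coalgebra}, the right $H^{\op}$-module axioms, the bilinear distribution laws~\eqref{condicion q-braza}, and left and right regularity.

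\textbf{Main obstacle.} The main obstacle is the sheer bookkeeping of case distinctions on the $x$-degrees of the operands. The crucial simplification is that by~\eqref{cdot y dpu 1} any product of basis elements whose right operand has positive $x$-degree vanishes, so almost every case of every identity reduces to the trivial statement $0=0$. The genuinely nontrivial cases are those in which all right operands lie in $\mathds{k}[G]$, and each of them, after extracting the scalar character factors, reduces either to the $q$-brace axioms on $G$ or to one of items~(2)--(6). Regularity is then established by observing that $\overline{G}_{\mathcal{H}}$ preserves the $x$-degree bigrading on $H_{\mathcal{D}}\ot H_{\mathcal{D}}$ and is invertible on each graded component, with inverse built from $\lambda^{-1}$ and the inverse of the left-regular $q$-brace action of $G$ on itself.
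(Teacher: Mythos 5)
Your proposal follows the paper's proof in all essentials: you take $\lambda(g)\coloneqq\lambda^{xg}_{11}$ and $\xi(g)\coloneqq\xi^{xg}_{11}$, obtain the formulas~\eqref{cdot y dpu 1} from Proposition~\ref{resumen}(5) together with the vanishing $hx^{\jmath}\cdot x=0$ (the paper's claim~\eqref{eq9}, proved by the same induction on the $x$-degree using~\eqref{condicion q-braza} and $x\cdot x=0$), and you extract items~1)--6) from exactly the same axioms as the paper does (the interchange~\eqref{intercambio . :} for item~1, which is indeed equivalent to Lemma~\ref{restriccion}; the three $q$-cycle identities for items~2)--4); the two identities~\eqref{condicion q-braza} for items~5)--6)). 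Specializing the cycle axioms to $u=x$ instead of $u=gx^{\imath}$ is harmless for necessity, and your converse checklist and the ``everything with a right operand of positive $x$-degree vanishes'' reduction match the paper's case analysis.

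The one step that does not work as you state it is the compatibility with $x^n=\alpha(a^n-1)$ in the converse. Raising item~5) to the $n$-th power and using $\chi^n=1$ only yields $\lambda(l)^n=\lambda(l\dpu h)^n$, i.e.\ that $\lambda^n$ is constant along the $\dpu$-action; it does not give what is actually needed, namely $\lambda(a^n)=\xi(a^n)=1$. That is the condition that makes the right $H_{\mathcal{D}}^{\op}$-module axiom survive the relation $x^n=\alpha(a^n-1)$: when $0<\jmath+\jmath'=n$ one must check
\begin{equation*}
lx^{\imath}\cdot \alpha hg(a^n-1)\;=\;\alpha\,\lambda^{\imath}(hg)\bigl(\lambda^{\imath}(a^n)-1\bigr)(l\cdot hg)\,x^{\imath}\;=\;0,
\end{equation*}
which is automatic when $\alpha(a^n-1)=0$ (either $\alpha=0$ or $a^n=1$, whence $\lambda(a^n)=\lambda(1)=1$), but otherwise requires $\lambda(a^n)=1$, and this is not a formal consequence of items~1)--6). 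In the forward direction this is exactly Lemma~\ref{calculo de lambda^{xa}_{10}, etc} (and hence Proposition~\ref{resumen}(6)), which is what the paper cites at the corresponding point of its verification. You should replace your item-5) argument by an appeal to that lemma in the necessity direction, and in the sufficiency direction either restrict to $\alpha(a^n-1)=0$ or add $\lambda(a^n)=\xi(a^n)=1$ as an explicit hypothesis in the remaining case.
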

    
\begin{proof} Assume for a while that $\mathcal{H}=(H_{\mathcal{D}},\cdot,\dpu)$ is a Hopf $q$-brace, with $x\cdot x = 0$ and~$x\dpu x = 0$. At the be\-ginning of Section~\ref{section 2}, we saw that $\mathcal{H}$ induces by restriction a $q$-brace structure on $G$. Also, by items~2) and 4) of Proposition~\ref{resumen}, we know that $a\cdot g=a\dpu g=a$ and $g\cdot a=g\dpu a=g$, for all $g\in G$. We claim that 
\begin{equation}\label{eq9}
gx^r\cdot x=0\quad\text{and}\quad gx^r\dpu x = 0\qquad\text{for all $g\in G$ and $r\in \mathds{N}_0$.} 
\end{equation}
We prove the first equality and leave the second one to the reader. Since $gx^r\cdot x = \chi(g)^{-r}x^rg\cdot x$, by~con\-dition~\eqref{condicion q-braza} and items~3) and~5) of Proposition~\ref{resumen}, we have  
$$
gx^r\cdot x =\chi(g)^{-r}(x^r\cdot (x\dpu g))(g\cdot a) + \chi(g)^{-r}(x^r\cdot (1\dpu g))(g\cdot x) =\chi(g)^{-r}\xi_{11}^{xg}(x^r\cdot x)(g\cdot a). 
$$
So, we can suppose that $g=1$. We now proceed by induction on $r$. For $r=0$ this is true by Proposition~\ref{resumen}(3) and, for $r=1$, by hypothesis.~As\-suming that $x^r\cdot x=0$ and using condition~\eqref{condicion q-braza} and the fact that $x\dpu x = 0$, $x\dpu a = \xi_{11}^{xa} x$ and $1\cdot x = 0$, we obtain
$$
x^{r+1}\cdot x = (x^r\cdot (x\dpu a))(x\cdot a) + (x^r\cdot (x\dpu x))(1 \cdot a) + (x^r\cdot (1 \dpu a))(x \cdot x) + (x^r\cdot (1 \dpu x))(1 \cdot x)=0,
$$
and so $x^r\cdot x=0$, for all $r\in \mathds{N}$. Since $H_{\mathcal{D}}$ is a right $H_{\mathcal{D}}^{\op}$-module via $\cdot$ and $\dpu$, it follows from Proposition~\ref{resumen}(5) and condition~\eqref{eq9}, that identities~\eqref{cdot y dpu 1} hold with $\lambda(g)\coloneqq \lambda^{xg}_{11}$ and~$\xi(g)\coloneqq \xi^{xg}_{11}$. Note that, by Lemma~\ref{x cdot g}(3), the maps $\lambda$ and $\xi$ are characters from $G$ to $\mathds{k}^{\times}$. It remains to check that $H_{\mathcal{D}}$ is a Hopf $q$-brace via the maps $\cdot$ and $\dpu$ defined by these formulas if and only if conditions~1)--6) are satisfied.
    	
\smallskip
    	
\noindent \textsc{$\mathcal{H}$ is a $q$-magma coalgebra:}\enspace A direct computation proves that if $\jmath>0$, then
\begin{equation*}
(p\ot p)\Delta_{H_{\mathcal{D}} \ot H^{\op}_{\mathcal{D}}}(hx^{\imath}\ot gx^{\jmath})=0=\Delta_{H_{\mathcal{D}}}p(hx^{\imath}\ot gx^{\jmath}).
\end{equation*}
Moreover, 
\begin{align*}
\Delta_{H_{\mathcal{D}}} p(hx^{\imath}\ot g) &=  \Delta_{H_{\mathcal{D}}}(\lambda^{\imath}(g) (h\cdot g)x^{\imath})\\
%
%
&= \sum_{0\le k\le \imath} \binom{\imath}{k}_{\!q} \lambda^k(g)(h\cdot g)x^k\ot \lambda^{\imath-k}(g)(h\cdot g)a^kx^{\imath-k}\\
&= (p\ot p)\Delta_{H_{\mathcal{D}} \ot H^{\op}_{\mathcal{D}}}(hx^{\imath}\ot g).
\end{align*}
This proves that $p\colon H_{\mathcal{D}}\ot H_{\mathcal{D}}^{\cop}\to H_{\mathcal{D}}$ is a coalgebra morphism. Similarly $d\colon H_{\mathcal{D}}\ot H_{\mathcal{D}}^{\cop}\to H_{\mathcal{D}}$ is also. So, in order to conclude that $\mathcal{H}$ is a $q$-magma coalgebra, it suffices to check identity~\eqref{intercambio . :}. But, 
$$
(gx^{\imath})_{(1)}\dpu (hx^{\jmath})_{(2)}\ot (hx^{\jmath})_{(1)}\cdot (gx^{\imath})_{(2)} =  gx^{\imath} \dpu ha^{\jmath} \ot hx^{\jmath}\cdot  ga^{\imath}= (g\dpu h)\xi^{\imath}(ha^{\jmath})x^{\imath}\ot (h\cdot g)\lambda^{\jmath}(ga^{\imath})x^{\jmath}
$$
and
$$
(gx^{\imath})_{(2)}\dpu (hx^{\jmath})_{(1)}\ot (hx^{\jmath})_{(2)}\cdot (gx^{\imath})_{(1)} = gx^{\imath} \dpu h \ot hx^{\jmath}\cdot g= (g\dpu h)\xi^{\imath}(h)x^{\imath}\ot (h\cdot g)\lambda^{\jmath}(g)x^{\jmath},
$$
which shows that $\mathcal{H}$ is a $q$-magma coalgebra if and only if $\lambda(a)=\xi^{-1}(a)$.
    	
\smallskip
    	
\noindent \textsc{$\mathcal{H}$ is regular:}\enspace By the very definitions of $\overline{G}_{\mathcal{H}}$ and $\overline{G}_{\mathcal{H}^{\op}}$, 
$$
\overline{G}_{\mathcal{H}}(gx^{\imath}\ot hx^{\jmath})= \lambda^{\imath}(h)(g\cdot h)x^{\imath}\ot hx^{\jmath}\quad\text{and}\quad \overline{G}_{\mathcal{H}^{\op}}(gx^{\imath}\ot hx^{\jmath}) =\xi^{\imath}(ha^{\jmath})(g\dpu h)x^{\imath}\ot hx^{\jmath}.
$$ 
From these formulas it follows easily that $\mathcal{H}$ is regular.
    	
\smallskip
    	
\noindent \textsc{$\mathcal{H}$ is $q$-cycle coalgebra:}\enspace If $k>0$ or $\jmath>0$, then
$$
(gx^{\imath}\cdot (hx^{\jmath})_{(1)})\cdot (lx^k\dpu (hx^{\jmath})_{(2)}) = 0\quad\text{and}\quad (gx^{\imath}\cdot (lx^k)_{(2)})\cdot (hx^{\jmath}\cdot (lx^k)_{(1)})= 0.
$$
Moreover, 
$$
(gx^{\imath}\cdot h)\cdot (l\dpu h)= \lambda^{\imath}(h)(g\cdot h)x^{\imath}\cdot (l\dpu h)= \lambda^{\imath}(h(l\dpu h))(g\cdot h)\cdot (l\dpu h)x^{\imath}
$$                           
and
$$
(gx^{\imath}\cdot l)\cdot (h\cdot l) = \lambda^{\imath}(l)(g\cdot l)x^{\imath}\cdot (h\cdot l)= \lambda^{\imath}(l(h\cdot l))(g\cdot l)\cdot (h\cdot l)x^{\imath}. 
$$
So, $\mathcal{H}$ satisfies condition~\eqref{1 de def: q cycle coalgebra} if and only if $\lambda(h(l\dpu h)) = \lambda(l(h\cdot l))$ for all $h,l\in G$. Similar computations show that condition~\eqref{2 de def: q cycle coalgebra} is fulfilled if and only if $\lambda(h)\xi(l\cdot h)= \xi(l)\lambda(h\dpu l)$, for all $h,l\in G$; and that condition~\eqref{3 de def: q cycle coalgebra} is fulfilled if and only if $\xi(h(l\dpu h))= \xi(l(h\cdot l))$ for all $h,l\in G$.

\smallskip
    	
\noindent \textsc{$H_{\mathcal{D}}$ is a right $H_{\mathcal{D}}^{\op}$-module via $p$ and $d$:}\enspace From the formulas \eqref{cdot y dpu 1} it follows that, if $0<\jmath + \jmath'\ne n$, then
$$
lx^{\imath}\cdot hx^{\jmath} gx^{\jmath'} = lx^{\imath} \cdot h \xi^j(g) gx^{\jmath+\jmath'} = 0\quad\text{and}\quad (lx^{\imath}\cdot hx^{\jmath})\cdot gx^{\jmath'} =0,
$$
that if $0<\jmath + \jmath' = n$, then, by Proposition~\ref{resumen}(6), 
$$
lx^{\imath}\cdot hx^{\jmath} gx^{\jmath'} = \xi^j(g) lx^{\imath}\cdot hg\alpha (a^n-1) = 0\quad\text{and}\quad (lx^{\imath}\cdot hx^{\jmath})\cdot gx^{\jmath'} =0,
$$
and that
$$
lx^{\imath}\cdot h g= \lambda^{\imath}(hg) (l\cdot hg) x^{\imath}= \lambda^{\imath}(h)\lambda^{\imath}(g) ((l\cdot g)\cdot h) x^{\imath}= (lx^{\imath}\cdot g)\cdot h.
$$
So, $H_{\mathcal{D}}$ is a right $H_{\mathcal{D}}^{\op}$-module via $p$, and similarly, $H_{\mathcal{D}}$ is a right $H_{\mathcal{D}}^{\op}$-module via $d$.
    	
\smallskip
    	
\noindent \textsc{$\mathcal{H}$ satisfies identities~\eqref{condicion q-braza}:}\enspace If $k>0$, then
$$
(gx^{\imath}hx^{\jmath})\cdot lx^k= 0\quad\text{and}\quad (gx^{\imath} \cdot ((lx^k)_{(1)} \dpu (hx^{\jmath})_{(2)}))((hx^{\jmath})_{(1)}\cdot (lx^k)_{(2)})=0.
$$
Assume that $k=0$. Then
\begin{align*}
(gx^{\imath}\cdot (l \dpu (hx^{\jmath})_{(2)}))((hx^{\jmath})_{(1)}\cdot l) &= (gx^{\imath} \cdot (l \dpu ha^{\jmath}))(hx^{\jmath}\cdot l)\\
&= (gx^{\imath} \cdot (l\dpu h))(hx^{\jmath}\cdot l)\\
&= \lambda^{\imath}(l \dpu h) (g\cdot (l \dpu h)) x^{\imath} \lambda^{\jmath}(l) (h\cdot l) x^{\jmath}\\
&= \lambda^{\imath}(l \dpu h)\lambda^{\jmath}(l)\chi^{\imath}(h\cdot l) (g\cdot (l \dpu h)) (h\cdot l) x^{\imath+\jmath}
\end{align*}
and
\begin{equation*}
gx^{\imath}hx^{\jmath}\cdot l= \chi^{\imath}(h)gh x^{\imath+\jmath}\cdot l=  \chi^{\imath}(h) \lambda^{\imath+\jmath}(l)(gh\cdot l)x^{\imath+\jmath}.
\end{equation*}
Hence, the first identity in~\eqref{condicion q-braza} holds if and only if $\chi(h)\lambda(l)=\lambda(l\dpu h)\chi(h\cdot l)$, for all $h,l\in G$. Similarly, the second identity in~\eqref{condicion q-braza} holds if and only if $\chi(h)\xi(l)=\xi(l\cdot h)\chi(h\dpu l)$, for all $h,l\in G$.
\end{proof}

\begin{remark}\label{ejemplo particular} If $(G,\cdot,\dpu)$ is an skew-brace (i.e. $(l\dpu h)h = (h\cdot l)l$ for all $h,l\in G$), then items~2) and~4) of Theorem~\ref{clasificacion 1} are satisfied. 
\end{remark}

\begin{remark}\label{ejemplo particular'} If $h\cdot g = h\dpu g = h$, for all $h,g\in G$, then items~2)--6) of Theorem~\ref{clasificacion 1} are satisfied.
\end{remark}

\begin{remark}\label{Hopf skew braces caso 1} Let $\mathcal{H}=(H_{\mathcal{D}},\cdot,\dpu)$ be a Hopf $q$-brace structure on $H_{\mathcal{D}}$. Assume that $x\cdot x=x\dpu x=0$. Let $\lambda$ and $\xi$ be as in Theorem~\ref{clasificacion 1}. A direct computation shows that, for each $g,h\in G$ and  $0\le \imath,\jmath<n$,
$$
(gx^{\imath}\dpu (hx^{\jmath})_{(2)})(hx^{\jmath})_{(1)}=\xi^{\imath}(h)\chi^{\imath}(h)(g\dpu h)hx^{\imath+\jmath}\quad\text{and}\quad (hx^{\jmath}\cdot (gx^{\imath})_{(1)})(gx^{\imath})_{(2)}=\lambda^{\jmath}(g)\chi^{\jmath}(g)(h\cdot g)gx^{\imath+\jmath}. 
$$	
From this it follows immediately that $\mathcal{H}$ is a Hopf skew-brace if and only if $\mathcal{H}$ induces a skew-brace structure on $G$ and $\xi=\chi^{-1}=\lambda$. Note that, by item~1) of Theorem~\ref{clasificacion 1}, this implies $\chi(a)=-1$, and so $n=2$.
\end{remark}

\begin{example}\label{ejemplo taft} Let $G\coloneqq \langle w\rangle$ be the cyclic group of order $n>1$ and let $\mathcal{D}\coloneqq (G,\chi,a,0)$ be the datum obtained taking $a\coloneqq w$ and $\chi(w)\coloneqq \varrho$, where $\varrho\in \mathds{k}^{\times}$ is a root of~$1$ of order~$n$. The Hopf algebra $H_{\mathcal{D}}$ is the Taft algebra $T_n$. By Theorem~\ref{clasificacion 1} and Remark~\ref{ejemplo particular}, for each character $\lambda\colon G\to \mathds{k}^{\times}$, the Hopf algebra $T_n$ is a Hopf $q$-brace, via 
\begin{equation*} 
h x^{\jmath}\cdot g x^{\jmath'}= \begin{cases} \lambda^{\jmath}(g) h x^{\jmath} &\text{if $\jmath'=0$,}\\0 & \text{otherwise,}\end{cases} \qquad\text{and}\qquad h x^{\jmath}\dpu g x^{\jmath'}= \begin{cases} \lambda^{-\jmath}(g) h x^{\jmath} &\text{if $j'=0$,}\\ 0 & \text{otherwise,}\end{cases}
\end{equation*}
where $h,g\in G$ and $\jmath,\jmath'\in \mathds{N}_0$. Note that, by Theorem~\ref{clasificacion 1} and Proposition~\ref{resumen}(2), these are all the Hopf $q$-brace structures on $T_n$ satisfying $x\cdot x = x\dpu x = 0$.
\end{example}

\begin{example}\label{ejemplo 1} Let $p\in \mathds{N}$ be a prime number and let $0<\nu<\eta<2\nu$. Consider the cyclic group $G\coloneqq \langle w\rangle$, of order $p^{\eta}$, endowed with the (right) skew-brace structure $(G,\cdot,\dpu)$ given by 
$$
w^i\cdot w^j = w^i\dpu w^j \coloneqq w^{i+p^{\nu}ij}. 
$$
Let $\mathcal{D}\coloneqq (G,\chi,a,0)$ be the datum obtained taking $a\coloneqq w^{p^{\eta-\nu}}$ and $\chi(w)\coloneqq \varrho$, where $\varrho\in \mathds{k}^{\times}$ is a root of~$1$ of order~$p^{\nu}$ (note that $a\cdot g=a\dpu g=a$ and $g\cdot a=g\dpu a=g$, for all $g\in G$). Since the order of $\chi(a)$ is $n\coloneqq p^{2\nu-\eta}$, the dimension of $H_{\mathcal{D}}$ as a $\mathds{k}$-vector space is $p^{\eta}p^{2\nu-\eta} = p^{2\nu}$. If $\lambda,\xi\colon G\to \mathds{k}^{\times}$ are characters satisfying $\lambda(a) = \xi^{-1}(a)$ and $\lambda(w^{p^{\nu}}) = 1$, then conditions~1)--6) of Theorem~\ref{clasificacion 1} are fulfilled, and so $H_{\mathcal{D}}$ is a Hopf $q$-brace via the operations~\eqref{cdot y dpu 1}.
\end{example}

\begin{example}\label{ejemplo 2} Let $0<\nu<\eta\le 2\nu$. Consider the cyclic group $G\coloneqq \langle w\rangle$, of order $2^{\eta}$, endowed with the (right) skew-brace structure $(G,\cdot,\dpu)$ given by 
$$
w^i\cdot w^j = w^i\dpu w^j \coloneqq w^{i+2^{\nu}ij}. 
$$
Let $\mathcal{D}\coloneqq (G,\chi,a,0)$ be the datum obtained taking $a\coloneqq w^{2^{\eta-\nu}}$ and $\chi(w)\coloneqq \varrho$, where $\varrho\in \mathds{k}^{\times}$ is a root of~$1$ of order~$2^{\nu+1}$ (as in the previous example, $a\cdot g=a\dpu g=a$ and $g\cdot a=g\dpu a=g$, for all $g\in G$). Since the order of $\chi(a)$ is $n\coloneqq 2^{2\nu+1-\eta}$, the dimension of $H_{\mathcal{D}}$ as a $\mathds{k}$-vector space is $2^{\eta}2^{2\nu+1-\eta} = 2^{2\nu+1}$. If $\lambda,\xi\colon G\to \mathds{k}^{\times}$ are characters satisfying $\lambda(a) = \xi^{-1}(a)$ and $\lambda(w^{2^{\nu}}) = -1$, then conditions~1)--6) of Theorem~\ref{clasificacion 1} are fulfilled, and so $H_{\mathcal{D}}$ is a Hopf $q$-brace via the operations~\eqref{cdot y dpu 1}.
\end{example}

\begin{theorem}\label{n=2} Let $\mathcal{H}=(H_{\mathcal{D}},\cdot,\dpu)$ be a Hopf $q$-brace. Assume that $x\cdot x\ne 0$ and~$x\dpu x\ne 0$. Then~$n=2$,~$\mathcal{H}$~in\-duces by restriction a $q$-brace structure on $G$ such that $a\cdot g=a\dpu g=a$ and $g\cdot a=g\dpu a=g$, for all $g\in G$,  
	
\begin{enumerate}[itemsep=0.7ex, topsep=1.0ex, label={\emph{\arabic*)}}]
		
\item $\chi^2=\ide$,
		
\item $\chi(h)\chi(l\dpu h)= \chi(l)\chi(h\cdot l)$ for all $h,l\in G$,
				
\end{enumerate}
and there exists $\lambda_{10}\in \mathds{k}^{\times}$ such that
\begin{align}
& hx\cdot g=\chi(g)(h\cdot g)x\quad\text{and}\quad hx\dpu g=\chi(g)(h\dpu g)x,\label{formula hx cdot g y hx dpu g}\\
& h\cdot gx= 0 \quad\text{and}\quad  h\dpu gx=0,\label{h cdot gx y h dpu gx}\\
& hx\cdot gx= \lambda_{10}(h\cdot g)(1-a)\quad\text{and}\quad hx\dpu gx= -\lambda_{10}(h\dpu g)(1-a).\label{hx cdot gx y hx dpu gx}
\end{align}
Conversely, if $(G,\cdot,\dpu)$ is a $q$-brace such that $a\cdot g=a\dpu g=a$ and $g\cdot a=g\dpu a=a$, for all $g\in G$,~and~con\-ditions~1) and~2) are fulfilled, then, for each $\lambda_{10}\in \mathds{k}^{\times}$, the Hopf algebra $H_{\mathcal{D}}$ is a Hopf $q$-brace via the $q$-brace structure of $G$ and~\eqref{formula hx cdot g y hx dpu g}--\eqref{hx cdot gx y hx dpu gx}.
\end{theorem}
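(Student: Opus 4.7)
The plan is in two phases.

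\textbf{Forward direction.} Since $x \cdot x \ne 0$ and $x \dpu x \ne 0$, Lemma~\ref{posibilidades} immediately gives $x \cdot g = \chi(g) x$ and $x \dpu g = \chi(g) x$ for all $g \in G$, so $\lambda^{xg}_{11} = \xi^{xg}_{11} = \chi(g)$. By Lemma~\ref{restriccion}, $\chi(a)^2 = 1$ and $\xi_{10} = -\lambda_{10}$; since $n > 1$ is the order of $\chi(a)$ this yields $n = 2$ and $\chi(a) = -1$. The induced $q$-brace structure on $G$ and the identities $a \cdot g = a \dpu g = a$, $g \cdot a = g \dpu a = g$ come from the opening paragraph of Section~\ref{section 2} together with Proposition~\ref{resumen}(2),(4).

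To obtain condition~1) ($\chi^2 = \ide$), I would substitute $u = w = x$ and $v = g \in G$ into the $q$-cycle identity~\eqref{2 de def: q cycle coalgebra}: after using $g \dpu x = 0$ and $x \dpu a = -x$, the left side collapses to $\chi(g)^2 \xi_{10}(1-a)$ and the right side to $\xi_{10}(1-a)$, forcing $\chi(g)^2 = 1$. For condition~2), I would compute $xl \cdot g$ with $l, g \in G$ in two ways: using $xl = \chi(l) lx$ together with Proposition~\ref{resumen}(5) gives $xl \cdot g = \chi(l)\chi(g)(l \cdot g) x$, while the direct Sweedler expansion of the braza identity~\eqref{condicion q-braza} yields $(x \cdot (g \dpu l))(l \cdot g) = \chi(g \dpu l)\chi(l \cdot g)(l \cdot g) x$. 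Equating the two expressions and applying $\chi^2 = \ide$ rearranges to the stated $\chi(h)\chi(l \dpu h) = \chi(l)\chi(h \cdot l)$.

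The explicit formulas then fall out: \eqref{formula hx cdot g y hx dpu g} is Proposition~\ref{resumen}(5) at $r = 1$ combined with $\lambda^{xg}_{11} = \xi^{xg}_{11} = \chi(g)$; \eqref{h cdot gx y h dpu gx} is Proposition~\ref{resumen}(3); and for~\eqref{hx cdot gx y hx dpu gx} I would apply~\eqref{condicion q-braza} to $hx \cdot gx$ with $h$, $k = x$, $l = gx$, noting that three of the four summands vanish (via $h \cdot gx = 0$ and $h \cdot ga = h \cdot g$) and leaving $(h \cdot g)(x \cdot gx)$. The right $H_{\mathcal{D}}^{\op}$-module property gives $x \cdot gx = (x \cdot x) \cdot g = \lambda_{10}(1-a)$, and the analogue for $\dpu$ follows identically, with $\xi_{10} = -\lambda_{10}$ producing the sign.

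\textbf{Converse.} Given the data, define $\cdot$ and $\dpu$ on $H_{\mathcal{D}}$ by bilinear extension of the prescribed formulas together with $x \cdot x = \lambda_{10}(1-a)$ and $x \dpu x = -\lambda_{10}(1-a)$, and verify every axiom of a Hopf $q$-brace on the basis $\{hx^i : h \in G,\ i \in \{0, 1\}\}$. The coalgebra-map property of $p$ and $d$, the compatibility~\eqref{intercambio . :}, the regularity of $\mathcal{H}$, and the $H_{\mathcal{D}}^{\op}$-module axioms for both operations reduce to direct checks on basis vectors. The main obstacle is verifying the braza identity~\eqref{condicion q-braza} and the three $q$-cycle identities~\eqref{1 de def: q cycle coalgebra}--\eqref{3 de def: q cycle coalgebra}: for each combination of basis vectors the Sweedler expansion gives at most four summands, and after applying $\chi(a) = -1$, $\chi^2 = \ide$, condition~2), and the braza axiom for $G$, the $\chi$-twists coalesce and each identity on $H_{\mathcal{D}}$ reduces to its counterpart on $G$. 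The cases in which $hk$ in $H_{\mathcal{D}}$ involves $x^2 = \alpha(a^2-1)$ require slightly more care, but once each side is written in the basis (using $xa = -ax$) the argument again collapses to the braza axiom on $G$.
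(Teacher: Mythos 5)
Your overall architecture coincides with the paper's: extract the structure constants from the lemmas of Section~\ref{section 2}, then verify the Hopf $q$-brace axioms case by case on the basis $\{hx^i\}$ for the converse. Two of your derivations are genuinely different and both are valid. You obtain $n=2$ from Lemma~\ref{posibilidades} combined with Lemma~\ref{restriccion} (which give $\chi(a)^2=\lambda^{xa}_{11}\xi^{xa}_{11}=1$), which is more direct than the paper's route through the $q$-cycle identity~\eqref{1 de def: q cycle coalgebra} applied to $(x\cdot x)\cdot (a\dpu a)$; and you get condition~1) from the single instance $u=w=x$, $v=g$ of~\eqref{2 de def: q cycle coalgebra}, whereas the paper extracts it as the obstruction arising in the case $u=gx$, $v=h$, $w=lx$ of its systematic verification. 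Your derivation of condition~2) from comparing the two expansions of $xl\cdot g$ is correct (the paper obtains the equivalent identity $\chi(h)\chi(l)=\chi(l\dpu h)\chi(h\cdot l)$ the same way, inside the verification of~\eqref{condicion q-braza}), and your description of the converse matches the paper's strategy in substance, though it is of course only a sketch of what the paper carries out over several pages of case analysis.

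The one step that does not work as written is your derivation of~\eqref{hx cdot gx y hx dpu gx}. Expanding $hx\cdot gx$ via~\eqref{condicion q-braza} with $k=x$ and $l=gx$ produces the four summands $(h\cdot (gx\dpu a))(x\cdot ga)$, $(h\cdot(gx\dpu x))(1\cdot ga)$, $(h\cdot(g\dpu a))(x\cdot gx)$ and $(h\cdot(g\dpu x))(1\cdot gx)$. The first and fourth vanish by $h\cdot gx=0$ and $g\dpu x=0$, and the third gives the answer; but the second summand is $h\cdot(gx\dpu x)$, and neither of the facts you invoke kills it until you know that $gx\dpu x$ lies in $\mathds{k}[G](1-a)$ --- which is essentially the $\dpu$-half of the identity being proved. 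The repair is easy: either first compute $gx\dpu x=\xi_{10}\,g(1-a)$ by expanding $gx\dpu x$ with the second identity of~\eqref{condicion q-braza} (using $x\cdot x=\lambda_{10}(1-a)$ and $x\dpu x=\xi_{10}(1-a)$ from Proposition~\ref{resumen}(1)), after which $h\cdot ga=h\cdot g$ does annihilate the term; or follow the paper and use the right $H_{\mathcal{D}}^{\op}$-module structure to write $hx\cdot gx=(hx\cdot x)\cdot g$ first, so that only $hx\cdot x=\chi(h)^{-1}(xh\cdot x)$ needs the $q$-brace identity and the problematic cross term never appears.
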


\begin{proof} Assume for a while that $\mathcal{H}=(H_{\mathcal{D}},\cdot,\dpu)$ is a Hopf $q$-brace, with $x\cdot x\ne 0$ and~$x\dpu x\ne 0$. We have
$$
\lambda_{10}(1-a)= (x\cdot x)\cdot (a \dpu a)= (x\cdot x)\cdot (a\dpu a) + (x\cdot 1)\cdot (a\dpu x)= (x\cdot a)\cdot (x\cdot a)= \chi^2(a)\lambda_{10}(1-a),
$$
where the first equality holds by items~1) and~4) of Proposition~\ref{resumen}; the second one, by Proposition~\ref{resumen}(3); the third one, by condition~\eqref{1 de def: q cycle coalgebra}; and the last one, by Proposition~\ref{resumen}(1) and Lemma~\ref{posibilidades}. So, \hbox{$\chi(a)=\pm 1$}. But $\chi(a)\ne 1$, and hence $\chi(a)=-1$ and $n=2$. The fact that $\mathcal{H}$ induces by restriction a $q$-brace structure on~$G$ such that $a\cdot g=a\dpu g=a$ and $g\cdot a=g\dpu a=g$, for all $g\in G$, follows as in the proof of Theorem~\ref{clasificacion 1}. We will use freely this fact in the rest of the proof. The identities in~\eqref{formula hx cdot g y hx dpu g} follow easily from Proposition~\ref{resumen}(5) and Lemma~\ref{posibilidades}; while the identities in~\eqref{h cdot gx y h dpu gx} hold by Proposition~\ref{resumen}(3). Since $hx = \chi(h)^{-1} xh$, $h\cdot x = 0$, $h\cdot a = h$ and $x\dpu h = \chi(h)x$, by condition~\eqref{condicion q-braza}, we have
\begin{equation*}
hx\cdot gx = \chi(h)^{-1}(xh\cdot x)\cdot g = \chi(h)^{-1}\bigl((x \cdot (x\dpu h))(h\cdot  a) +(x \cdot (1\dpu  h))(h\cdot x)\bigr)\cdot g = (x \cdot x)h \cdot g.
\end{equation*}
Consequently, by Proposition~\ref{resumen}(1) and the fact that $a$ is central, we have 
\begin{equation*}
hx\cdot gx = \lambda_{10}h(1-a) \cdot g = \lambda_{10}(h\cdot g)(1-a),
\end{equation*}
and so the first identity in~\eqref{hx cdot gx y hx dpu gx} holds. A similar computation shows that 
$$
hx\dpu gx= \xi_{10}(h\dpu g)(1-a),
$$
which, by Lemma~\ref{restriccion}, finishes the proof of~\eqref{hx cdot gx y hx dpu gx}. It remains to check that $H_{\mathcal{D}}$ is a Hopf $q$-brace via the maps $\cdot$ and $\dpu$ defined by these formulas if and only if conditions~1) and~2) are satisfied.
    	
\smallskip
		
\noindent \textsc{$\mathcal{H}$ is a $q$-magma coalgebra:}\enspace the identities
\begin{align*}
&\begin{aligned}
\Delta_{H_{\mathcal{D}}}(hx\cdot g) &= \chi(g)(h\cdot g)x\ot (h\cdot g)a + \chi(g) (h\cdot g)\ot (h\cdot g)x\\
&= hx\cdot g\ot ha\cdot g + h\cdot g\ot hx\cdot g\\
&= (p\ot p)\Delta_{H_{\mathcal{D}} \ot H^{\op}_{\mathcal{D}}}(hx\ot g),
\end{aligned}\\
&\Delta_{H_{\mathcal{D}}}(h\cdot gx)=0= (p\ot p)\Delta_{H_{\mathcal{D}} \ot H^{\op}_{\mathcal{D}}}(h\ot gx)
\shortintertext{and}
& \begin{aligned}
\Delta_{H_{\mathcal{D}}}(hx\cdot gx)&= \lambda_{10}(h\cdot g)\ot (h\cdot g) - \lambda_{10}(h\cdot g)a\ot (h\cdot g)a\\
&= \lambda_{10}(h\cdot g)(1-a)\ot (h\cdot g)a + \lambda_{10} h\cdot g\ot (h\cdot g)(1-a)\\
&= hx\cdot ga\ot ha\cdot gx + hx\cdot gx\ot ha\cdot g + h\cdot ga\ot hx\cdot gx + h\cdot gx\ot hx\cdot g\\
&= (p\ot p)\Delta_{H_{\mathcal{D}} \ot H^{\op}_{\mathcal{D}}}(hx\ot gx),
\end{aligned}
\end{align*}
prove that $p$ is a coalgebra morphism, and a similar computation proves that $d$ is also. So, in order to conclude that $\mathcal{H}$ is a $q$-magma coalgebra, it suffices to check identity~\eqref{intercambio . :}. But the equalities
\begin{align*}
& (hx)_{(1)}\dpu g_{(2)}\ot g_{(1)}\cdot (hx)_{(2)} = hx\dpu g\ot g\cdot ha + h\dpu g\ot g\cdot hx = hx\dpu g\ot g\cdot ha,\\
& (hx)_{(2)}\dpu g_{(1)}\ot g_{(2)}\cdot (hx)_{(1)} =ha\dpu g \ot g\cdot hx + hx\dpu g\ot g\cdot h = hx\dpu g\ot g\cdot ha,\\
& h_{(1)}\dpu (gx)_{(2)}\ot (gx)_{(1)}\cdot h_{(2)} = h\dpu ga\ot gx\cdot h + h\dpu gx\ot g\cdot h = h\dpu g\ot gx\cdot h,\\
&h_{(2)}\dpu (gx)_{(1)}\ot (gx)_{(2)}\cdot h_{(1)} = h\dpu gx\ot ga\cdot h + h\dpu g\ot gx\cdot h = h\dpu g\ot gx\cdot h
\shortintertext{and}
& \begin{aligned}
(hx)_{(2)}\dpu (gx)_{(1)}&\ot (gx)_{(2)}\cdot (hx)_{(1)}=ha\dpu gx\ot ga\cdot hx + ha\dpu g\ot gx \cdot hx + hx\dpu gx\ot ga\cdot h + hx\dpu g\ot gx\cdot h\\
&= \lambda_{10}(h\dpu g)a \ot (g\cdot h)(1-a) - \lambda_{10}(h\dpu g)(1-a) \ot (g\cdot h)a + \chi(g)(h\dpu g)x \ot \chi(h)(g\cdot h)x\\	
&= \lambda_{10}(h\dpu g)a \ot g\cdot h - \lambda_{10}(h\dpu g) \ot (g\cdot h)a + \chi(g)(h\dpu g)x \ot \chi(h)(g\cdot h)x\\
&= \chi(ga)(h\dpu g)x \ot \chi(ha)(g\cdot h)x - \lambda_{10}(h\dpu g)(1-a)\ot g\cdot h + h\dpu g \ot \lambda_{10}(g\cdot h)(1-a)\\
&=hx\dpu ga \ot gx \cdot ha + hx\dpu gx \ot g \cdot ha +  h\dpu ga \ot gx \cdot hx + h\dpu gx \ot g \cdot hx\\
&=(hx)_{(1)}\dpu (gx)_{(2)}  \ot (gx)_{(1)} \cdot (hx)_{(2)},
\end{aligned}	
\end{align*}
prove that~\eqref{intercambio . :} is fulfilled.
	
\smallskip

\noindent \textsc{$\mathcal{H}$ is regular:}\enspace a direct computation proves that the maps $\overline{G}_{\mathcal{H}}$ and $\overline{G}_{\mathcal{H}^{\op}}$ are injective, and hence, bijective.

\smallskip

\noindent \textsc{$\mathcal{H}$ is $q$-cycle coalgebra:}\enspace Let $g,h,l\in G$. We divide the proof in several cases. The equalities
\begin{equation*}
(g\cdot h)\cdot (lx\dpu h)=0= (g\cdot la)\cdot (h\cdot lx) + (g\cdot lx)\cdot (h\cdot 1)
\end{equation*}
prove that condition~\eqref{1 de def: q cycle coalgebra} is always fulfilled for $u=g$, $v=h$ and $w=lx$. Similar~computa\-tions show that conditions~\eqref{2 de def: q cycle coalgebra} and~\eqref{3 de def: q cycle coalgebra} are also always satisfied for $u=g$, $v=h$ and $w=lx$. Arguing in the same way we can check that conditions~\eqref{1 de def: q cycle coalgebra}, \eqref{2 de def: q cycle coalgebra} and~\eqref{3 de def: q cycle coalgebra} are always satisfied for $u=g$, $v=hx$ and $w=l$. The equalities 
\begin{equation*}
(gx\cdot h)\cdot (l\dpu h)=\chi(h)\chi(l\dpu h)\bigl((g\cdot h)\cdot (l\dpu h)\bigr)x\quad\text{and}\quad (gx\cdot l)\cdot (h\cdot l)=\chi(l)\chi(h\cdot l)\bigl((g\cdot l)\cdot (h\cdot l)\bigr)x,
\end{equation*}
prove that condition~\eqref{1 de def: q cycle coalgebra} holds for $u=gx$, $v=h$ and $w=l$ if and only if $\chi(h)\chi(l\dpu h)= \chi(l)\chi(h\cdot l)$. Similar computations show that conditions~\eqref{2 de def: q cycle coalgebra} and~\eqref{3 de def: q cycle coalgebra} hold for $u=gx$, $v=h$ and $w=l$ if and only if~$\chi(h)\chi(l\cdot h)=\chi(l)\chi(h\dpu l)$ and $\chi(h)\chi(l\dpu h)=\chi(l)\chi(h\cdot l)$. The equalities 
\begin{align*}
&(g\cdot hx)\cdot (lx \dpu ha) + (g\cdot h)\cdot (lx \dpu hx)= -\lambda_{10} (g\cdot h)\cdot (l \dpu h)(1-a)=0
\shortintertext{and}
&(g \cdot la) \cdot (hx \cdot lx) +  (g \cdot lx) \cdot (hx \cdot l)= \lambda_{10} (g\cdot la)\cdot (h\cdot l)(1-a)=0,
\end{align*}	
prove that condition~\eqref{1 de def: q cycle coalgebra} is always fulfilled for $u=g$, $v=hx$ and $w=lx$. Similar~computa\-tions show that conditions~\eqref{2 de def: q cycle coalgebra} and~\eqref{3 de def: q cycle coalgebra} are also always satisfied for $u=g$, $v=hx$ and $w=lx$. The equalities 
\begin{align*}
&(gx\cdot h)\cdot (lx\dpu h)= \chi^2(h) (g\cdot h)x\cdot (l\dpu h)x= \lambda_{10}\chi^2(h)\bigl((g\cdot h)\cdot (l\dpu h)\bigr)(1-a),
\shortintertext{and}
&(gx\cdot la)\cdot (h\cdot lx) + (gx\cdot lx)\cdot (h\cdot l)= \lambda_{10}(g\cdot l)(1-a)\cdot (h\cdot l)=\lambda_{10}\bigl((g\cdot l)\cdot (h\cdot l)\bigr) (1-a),
\end{align*}
prove that condition~\eqref{1 de def: q cycle coalgebra} is fulfilled for $u=gx$, $v=h$ and $w=lx$ if and only if~$\chi^2(h)=\ide$. Similar computations show that conditions~\eqref{2 de def: q cycle coalgebra} and~\eqref{3 de def: q cycle coalgebra} are also satisfied for $u=gx$, $v=h$ and $w=lx$ if and only if~$\chi^2(h)=\ide$. Arguing in the same way we can check that conditions~\eqref{1 de def: q cycle coalgebra}, \eqref{2 de def: q cycle coalgebra} and~\eqref{3 de def: q cycle coalgebra} are satisfied for $u=gx$, $v=hx$ and $w=l$, if and only if~$\chi^2(l)=\ide$. The equalities
\begin{align*}
& (gx\cdot hx)\cdot (lx \dpu ha) + (gx \cdot h)\cdot (lx\dpu hx)=-\lambda_{10}\chi(h)(g\cdot h)x\cdot (l\dpu h)(1-a) \\
&\phantom{(gx\cdot hx)\cdot (lx \dpu ha) + (gx \cdot h)\cdot (lx\dpu hx)} = -2\lambda_{10}\chi(h)\chi(l\dpu h)\bigl((g\cdot h)\cdot (l\dpu h)\bigr)x
\shortintertext{and}
& (gx\cdot la)\cdot (hx \cdot lx) + (gx\cdot lx)\cdot (hx\cdot l)=-\lambda_{10}\chi(l)(g\cdot l)x\cdot (h\cdot l)(1-a)\\
&\phantom{(gx\cdot la)\cdot (hx \cdot lx) + (gx\cdot lx)\cdot (hx\cdot l)} = -2\lambda_{10}\chi(l)\chi(h\cdot l)\bigl((g\cdot l)\cdot (h\cdot l)\bigr) x,
\end{align*}
prove that condition~\eqref{1 de def: q cycle coalgebra} is fulfilled for $u=gx$, $v=hx$ and $w=lx$ if and only if $\chi(h)\chi(l\dpu h)=\chi(l)\chi(h\cdot l)$. Similar computations show that conditions~\eqref{2 de def: q cycle coalgebra} and~\eqref{3 de def: q cycle coalgebra} hold for $u=gx$, $v=hx$ and $w=lx$ if and only if $\chi(h)\chi(l\cdot h)=\chi(l)\chi(h\dpu l)$ and $\chi(h)\chi(l\dpu h)=\chi(l)\chi(h\cdot l)$.

\smallskip

\noindent \textsc{$H_{\mathcal{D}}$ is a right $H_{\mathcal{D}}^{\op}$-module via $p$ and $d$:}\enspace  the identities
\begin{align*}
&(lx\cdot g)\cdot h= \chi(h)\chi(g)((l\cdot g)\cdot h)x=\chi(hg)(l\cdot hg)x = lx\cdot hg,\\
&(l\cdot gx)\cdot h= 0= l\cdot hgx,\\
&(l\cdot g)\cdot hx=0=l\cdot hxg,\\
&(lx\cdot gx)\cdot h= \lambda_{10} (l\cdot g)(1-a)\cdot h= \lambda_{10} ((l\cdot g)\cdot h)(1-a)= \lambda_{10} (l\cdot hg)(1-a)=lx\cdot hgx,\\
&(lx\cdot g)\cdot hx=\chi(g)(l\cdot g)x\cdot hx=\chi(g)\lambda_{10}((l\cdot g)\cdot h)(1-a)=\chi(g)\lambda_{10}(l\cdot hg)(1-a)=lx \cdot hxg\\
&(l\cdot gx)\cdot hx = 0 = \chi(g)\alpha (l\cdot (1-a))\cdot hg = \chi(g)\alpha l\cdot hg (1-a) = l\cdot hxgx 
\shortintertext{and}
&(lx\cdot gx)\cdot hx= 0= \alpha\chi(g) lx\cdot hg (a^2-1)= lx\cdot hxgx,
\end{align*}	
prove that $H_{\mathcal{D}}$ is a right $H_{\mathcal{D}}^{\op}$-module via $p$. Similarly, $H_{\mathcal{D}}$ is a right $H_{\mathcal{D}}^{\op}$-module via $d$.

\smallskip

\noindent \textsc{$\mathcal{H}$ satisfies the identities~\eqref{condicion q-braza}:}\enspace We begin with the first identity. Let $g,h,l\in G$. The identities
\begin{align*}
&gh\cdot lx =0=(g\cdot (lx \dpu h))(h\cdot la) + (g\cdot (l \dpu h))(h\cdot lx),\\
&ghx\cdot l= \chi(l)(gh\cdot l)x= \chi(l)(g\cdot (l\dpu h))(h\cdot l)x= (g\cdot (l\dpu ha))(hx \cdot l) + (g\cdot (l\dpu hx))(h \cdot l),\\
&gxh\cdot l= \chi(h)\chi(l)(gh\cdot l)x= \chi(h)\chi(l)(g\cdot (l\dpu h))(h\cdot l),\\
&(gx\cdot (l\dpu h))(h\cdot l)= \chi(l\dpu h)\chi(h\cdot l)(g\cdot (l\dpu h))(h\cdot l)x,\\
& \begin{aligned}
ghx\cdot lx &= \lambda_{10}(gh\cdot l)(1-a)\\
&=(g\cdot (l \dpu ha))(hx \cdot lx)\\
&=(g\cdot (lx \dpu ha))(hx \cdot la) + (g\cdot (lx \dpu hx))(h \cdot la) + (g\cdot (l \dpu ha))(hx \cdot lx) + (g\cdot (l \dpu hx))(h \cdot lx),
\end{aligned}\\
&\begin{aligned}
gxh\cdot lx &= \chi(h)\lambda_{10}(gh\cdot l)(1-a)\\
&=\chi(h)\lambda_{10}(g\cdot (l\dpu h))(h\cdot l)(1-a)\\
&=\chi(h)(gx\cdot (l\dpu h)x)(h\cdot l)\\
&=(gx\cdot (lx \dpu h))(h \cdot la) + (gx\cdot (l \dpu h))(h \cdot lx), 
\end{aligned}\\
&gxhx\cdot l=\chi(h)\alpha gh(a^2-1)\cdot l= \chi(h)\alpha (gh\cdot l)(a^2-1)= \chi(h)\alpha (g \cdot (l \dpu h))(h\cdot l)(a^2-1),\\
&(gx \cdot (l \dpu ha))(hx\cdot l) + (gx \cdot (l \dpu hx))(h\cdot l)= \chi(l)\chi(l\dpu h)\chi(h\cdot l)\alpha (g\cdot (l \dpu h))(h\cdot l)(a^2-1),\\
& gxhx\cdot lx=0
\shortintertext{and}
&\begin{aligned}
(gx \cdot &(lx \dpu ha))(hx \cdot la)   + (gx \cdot (lx \dpu hx))(h \cdot la) + (gx \cdot (l \dpu ha))(hx \cdot lx) + (gx \cdot (l \dpu hx))(h \cdot lx)\\
&=\chi(h)\chi(l)(gx\cdot (l\dpu h)x)(h\cdot l)x - \lambda_{10}(gx\cdot (l\dpu h)(1-a))(h\cdot l) + \lambda_{10}(gx \cdot (l \dpu h))(h \cdot l)(1-a)\\
&=\lambda_{10}\chi(h)\chi(l)(g\cdot (l\dpu h))(h\cdot l)x - \lambda_{10}\chi(h)\chi(l)(g\cdot (l\dpu h))(h\cdot l)ax\\
&\phantom{=\, } - 2\lambda_{10}\chi(l\dpu h)\chi(h\cdot l)(g\cdot (l\dpu h))(h\cdot l)x + \lambda_{10}\chi(l\dpu h)\chi(h\cdot l)(g\cdot (l\dpu h))(h\cdot l)x\\
&\phantom{= \,} + \lambda_{10}\chi(l\dpu h)\chi(h\cdot l)(g\cdot (l\dpu h))(h\cdot l)ax,
\end{aligned}
\end{align*} 
prove that $\mathcal{H}$ satisfies the first identity in~\eqref{condicion q-braza} if and only if, for all $h,l\in G$, we have
$$
\chi(h)\chi(l)=\chi(l\dpu h)\chi(h\cdot l)\quad\text{and}\quad \alpha\chi(h)(a^2-1)=\alpha\chi(l)\chi(l\dpu h)\chi(h\cdot l)(a^2-1).
$$
A similar computation shows that $\mathcal{H}$ satisfies the second identity in~\eqref{condicion q-braza} if and only if, for all $h,l\in G$, 
$$
\chi(h)\chi(l)=\chi(l\cdot h)\chi(h\dpu l)\quad\text{and}\quad \alpha\chi(h)(a^2-1)=\alpha\chi(l)\chi(l\cdot h)\chi(h\dpu l)(a^2-1).
$$
The proof follows easily from these facts.
\end{proof}

\begin{remark}\label{ejemplo particular2} If $(G,\cdot,\dpu)$ is a skew-brace, then item~2) of Theorem~\ref{n=2} is satisfied. 
\end{remark}

\begin{example}\label{ejemplo sweedler} Let $T_2$ be as in Example~\ref{ejemplo taft}. By Theorem~\ref{n=2} and Remark~\ref{ejemplo particular2}, for each $\lambda_{10}\in \mathds{k}^{\times}$, the Hopf algebra $T_2$ is a Hopf $q$-brace, via
$$
hx\cdot g= hx\dpu g=\chi(g)hx,\quad h\cdot gx=h\dpu gx=0\quad\text{and}\quad hx\cdot gx=- hx\dpu gx =\lambda_{10} h (1-w),
$$
for each $h,g\in G = \{1,w\}$. Note that, by Theorem~\ref{n=2} and Proposition~\ref{resumen}(2), these are all the Hopf $q$-brace structures on $T_2$, satisfying $x\cdot x\ne 0$ and $x\dpu x\ne 0$. Moreover, when $n>2$, there are no Hopf $q$-brace structures on $T_n$, satisfying $x\cdot x\ne 0$ and $x\dpu x\ne 0$.
\end{example}

\begin{example}\label{ejemplo 3} Let $0<\nu<\eta\le 2\nu$ and $(G,\cdot,\dpu)$ be as in Example~\ref{ejemplo 2}. Let $\mathcal{D}\coloneqq (G,\chi,a,\alpha)$ be the datum obtained taking $a\coloneqq w^{2^{\eta-\nu}+1}$ and $\chi(w)\coloneqq -1$ and $\alpha\in \mathds{k}$, (as in Example~\ref{ejemplo 2}, we have $a\cdot g=a\dpu g=a$ and $g\cdot a=g\dpu a=g$, for all $g\in G$). Since $\chi(a)=-1$, the dimension of $H_{\mathcal{D}}$ as a $\mathds{k}$-vector space is $2^{\eta}2 = 2^{\eta+1}$. Since con\-ditions~1) and~2) of Theorem~\ref{n=2} are fulfilled, $H_{\mathcal{D}}$ is a Hopf $q$-brace via~\eqref{formula hx cdot g y hx dpu g}--\eqref{hx cdot gx y hx dpu gx}, for each $\lambda_{10}\in \mathds{k}^{\times}$.
\end{example}

\begin{remark}\label{Hopf skew braces caso 2} Let $\mathcal{H}=(H_{\mathcal{D}}, \cdot,\dpu)$ be a Hopf $q$-brace structure on $H_{\mathcal{D}}$. Assume that $x\cdot x\ne 0$ and $x\dpu x\ne 0$ and  let $\lambda_{10}$ be as in Theorem~\ref{n=2}. A direct computation shows that
\begin{align*}
& (gx\dpu h_{(2)})h_{(1)} = (gx\dpu h)h = \chi(h) (g\dpu h) x h = (g\dpu h) hx,\\
& (h\cdot (gx)_{(1)})(gx)_{(2)} = (h\cdot gx)ga + (h\cdot g)gx = (h\cdot g)gx,\\
& (g\dpu (hx)_{(2)})(hx)_{(1)} = (g\dpu ha)hx + (g\dpu hx)h = (g\dpu h) hx,\\
& (hx\cdot g_{(1)})g_{(2)} = (hx\cdot g)g = \chi(g) (h\cdot g)xg = (h\cdot g)gx,\\
& (gx\dpu (hx)_{(2)})(hx)_{(1)} = (gx\dpu ha)hx+ (gx\dpu hx)h = -\alpha (g\dpu h)h (a^2-1) - \lambda_{10} (g\dpu h)h(1-a)\\
\shortintertext{and}
& (hx\cdot (gx)_{(1)})(gx)_{(2)} =(hx\cdot gx)ga + (hx\cdot g)gx = \lambda_{10}(h\cdot g)g(a-a^2) + \alpha (h\cdot g)g(a^2-1).
\end{align*}
Hence $\mathcal{H}$ is a Hopf skew-brace if and only if $G$ is a skew-brace and $2\alpha (a^2-1) = \lambda_{10} (a^2-1)$.
\end{remark}

Let $\mathcal{D}=(G,\chi,a,\alpha)$ and $\mathcal{D'}=(G',\chi',a',\alpha')$ be two data. By~\cite{KR}*{Theorem~1} we know that $H_{\mathcal{D}}$ and~$H_{\mathcal{D'}}$ are isomorphic Hopf algebras if and only if there is an isomorphism of groups, $f\colon G\to G'$, such that $f(a) = a'$, $\chi = \chi'\xcirc f$ and $\beta \alpha'({a'}^n-1) = \alpha({a'}^n-1)$, for some $\beta\in \mathds{k}^{\times}$, where $n$ is the order of $\chi(a)$. Moreover, from the proof it follows that each isomorphism $F\colon H_{\mathcal{D}}\to H_{\mathcal{D'}}$ is given by $F(gx^m) = \alpha_0^m f(g)x^m$, where $f$ is as above and $\alpha_0^n = \beta$.

\begin{remark} If $(H_{\mathcal{D}},\cdot,\dpu)$ and $(H_{\mathcal{D}'},\cdot,\dpu)$ are two isomorphic Hopf $q$-braces, then $x\cdot x = 0$ in both, or $x\cdot x \ne 0$ in both.
\end{remark}

\begin{remark} Let $(H_{\mathcal{D}},\cdot,\dpu)$ and $(H_{\mathcal{D}'},\cdot,\dpu)$ be Hopf $q$-braces that satisfy the conditions of Theorem~\ref{clasificacion 1}. Let~$\lambda$ and $\xi$ be the $\mathds{k}^{\times}$-valued characters of~$G$, associated with $(H_{\mathcal{D}},\cdot,\dpu)$ and let $\lambda'$ and $\xi'$ be the $\mathds{k}^{\times}$-valued characters of~$G'$, associated with $(H_{\mathcal{D'}},\cdot,\dpu)$. A direct computation shows that an isomorphism $F\colon H_{\mathcal{D}}\to H_{\mathcal{D'}}$, of Hopf algebras, is an isomorphism of Hopf $q$-braces if and only if $f\colon (G,\cdot,\dpu)\to (G',\cdot,\dpu)$ is an isomorphism, $\lambda = \lambda'\xcirc f$ and $\xi = \xi' \xcirc f$.
\end{remark}

\begin{remark} Let $(H_{\mathcal{D}},\cdot,\dpu)$ and $(H_{\mathcal{D}'},\cdot,\dpu)$ be Hopf $q$-braces that satisfy the conditions of Theorem~\ref{n=2}. Let $\lambda_{10}\in \mathds{k}^{\times}$ be the scalar associated with $(H_{\mathcal{D}},\cdot,\dpu)$ and let $\lambda'_{10}\in \mathds{k}^{\times}$ be the scalar associated with $(H_{\mathcal{D'}},\cdot,\dpu)$. A direct computation shows that an isomorphism $F\colon H_{\mathcal{D}}\to H_{\mathcal{D'}}$, of Hopf algebras, is an isomorphism of Hopf $q$-braces if and only if $f\colon (G,\cdot,\dpu)\to (G',\cdot,\dpu)$ is an isomorphism and $\frac{\lambda_{10}}{\lambda'_{10}} \alpha'({a'}^2-1) = \alpha({a'}^2-1)$.
\end{remark}

\section{The Socle}\label{section 4}

Let $\mathcal{H}=(H,\cdot,\dpu)$ be a Hopf $q$-brace with bijective antipode. Recall from~\cite{GGV}*{Definition~7.11} that the socle of $\mathcal{H}$ is the set
\begin{equation*}
\Soc(\mathcal{H})\coloneqq \{u\in H: u_{(1)}\ot v\cdot u_{(2)}\ot u_{(3)}=u_{(1)}\ot v\dpu u_{(2)}\ot u_{(3)}=u_{(1)}\ot v\ot u_{(2)}\ \forall u\in H\}.
\end{equation*}
In this section we compute the socle of the Hopf $q$-braces constructed in Theorems~\ref{clasificacion 1} and~\ref{n=2}. We treat separately the cases considered in each one of these results. Let $u\coloneqq hx^j$ and $v\coloneqq kx^i$. A direct computation using Remark~\ref{formula de Delta} shows that
\begin{equation}\label{calculo de Delta2}
\Delta^2(u) = \sum_{0\le l_1\le l_2\le j} \binom{j}{l_2}_{\!q} \binom{l_2}{l_1}_{\!q} hx^{l_1}\ot ha^{l_1}x^{l_2-l_1} \ot ha^{l_2}x^{j-l_2}.
\end{equation}

\subsection[Case \texorpdfstring{$x\cdot x = x\dpu x = 0$}{x.x = x:x=0}]{Case $\pmb{x\cdot x = x\dpu x = 0}$} 

Let $u\coloneqq hx^j$ and $v\coloneqq kx^i$. Using~\eqref{cdot y dpu 1}  and~\eqref{calculo de Delta2}, we obtain that $u_{(1)}\ot v\cdot u_{(2)}\ot u_{(3)} = u_{(1)}\ot v\ot u_{(2)}$ if and only if
\begin{equation*}
\sum_{0\le l\le j} \binom{j}{l}_{\!q} hx^l\ot \lambda^i(ha^l)(k\cdot h) x^i\ot ha^lx^{j-l} = \sum_{0\le l\le j} \binom{j}{l}_{\!q} hx^l\ot k x^i\ot ha^lx^{j-l}.
\end{equation*}
Similarly, $u_{(1)}\ot v\dpu u_{(2)}\ot u_{(3)} = u_{(1)}\ot v\ot u_{(2)}$ if and only if 
\begin{equation*}
\sum_{0\le l\le j} \binom{j}{l}_{\!q} hx^l\ot \xi^i(ha^l)(k\dpu h) x^i\ot ha^lx^{j-l} = \sum_{0\le l\le j} \binom{j}{l}_{\!q} hx^l\ot k x^i\ot ha^lx^{j-l}.
\end{equation*}
Using these facts it is easy to see that $\sum_{j=0}^{n-1}\sum_{h\in G} \mu_{j,h} h x^j\in \Soc(\mathcal{H})$ if and only if $h x^j\in \Soc(\mathcal{H})$ for all $j,h$ such that $\mu_{j,h}\ne 0$. Next we search conditions for $hx^j\in \Soc(\mathcal{H})$. By definition
\begin{equation*}
hx^j \in \Soc(\mathcal{H}) \Leftrightarrow \lambda^i(ha^l)(k\cdot h) = \xi^i(ha^l)(k\dpu h) = k\quad\text{for all $0\le i<n$, $0\le l\le j$ and $k\in G$.} 
\end{equation*}
Taking $i=0$, we get that $k\cdot h = k\dpu h = k$, for all $k\in G$. In other words $h\in \Soc(G,\cdot,\dpu)$. So 
\begin{equation*}
hx^j\in\Soc(\mathcal{H})\Leftrightarrow h\in \Soc(G,\cdot,\dpu)\text{ and } \lambda^i(ha^l)= \xi^i(ha^l)= 1\quad\text{for all $0\le i<n$ and $0\le l\le j$.} 
\end{equation*}
Since $\lambda$ and $\chi$ are characters and $\xi(a) = \lambda^{-1}(a)$, we conclude that
\begin{equation}\label{calculo del socalo 1}
hx^j \in \Soc(\mathcal{H}) \Leftrightarrow h\in \Soc(G,\cdot,\dpu)\text{ and } \begin{cases} \lambda(h) = \xi(h) = 1 & \text{if $j = 0$,}\\ \lambda(h) = \xi(h) = \lambda(a) = 1 & \text{if $j>0$.}\end{cases}
\end{equation}
Let $L\coloneqq\{h\in \Soc(G,\cdot,\dpu):\lambda(h) = \xi(h) = 1\}$. From~\eqref{calculo del socalo 1} we obtain that
\begin{equation*}
\Soc(\mathcal{H}) = \begin{cases} \mathds{k}[L]\oplus \mathds{k}[L]x\oplus\cdots\oplus \mathds{k}[L]x^{n-1} &\text{if $\lambda(a) = 1$,}\\ \mathds{k}[L] & \text{if $\lambda(a)\ne 1$.} \end{cases}
\end{equation*}
Consequently, 
\begin{equation*}
\frac{\mathcal{H}}{\Soc(\mathcal{H})^{+}\mathcal{H}} = \begin{cases} \mathds{k}[G/L] &\text{if $\lambda(a) = 1$,}\\ \mathds{k}[G/L]\oplus \mathds{k}[G/L]x\oplus\cdots\oplus \mathds{k}[G/L]x^{n-1} & \text{if $\lambda(a)\ne 1$.} \end{cases}
\end{equation*}
A direct computation shows that the hypotheses in~\cite{GGV}*{Proposition~7.18} are satisfied if and only if $\lambda(a) = 1$. Thus, in this case, $(G/L,\cdot,\dpu)$ is a skew-brace. Note that $L\subseteq \Soc(G)$.

\subsection[Case \texorpdfstring{$x\cdot x \ne 0$}{x.x not equal 0} and \texorpdfstring{$x\dpu x \ne 0$}{x:x not equal 0}]{Case $\pmb{x\cdot x\ne 0}$ and $\pmb{x\dpu x\ne 0}$} 

To begin with we will compute the condition
\begin{equation}\label{condiciones de socalo}
u_{(1)}\ot v \ot u_{(2)} = u_{(1)}\ot v\cdot u_{(2)}\ot u_{(3)} = u_{(1)}\ot v\dpu u_{(2)}\ot u_{(3)}
\end{equation}
in several cases. Using that $\mathcal{H}$ induces by restriction a $q$-brace structure on $G$, and formulas~\eqref{formula hx cdot g y hx dpu g}, \eqref{h cdot gx y h dpu gx}, \eqref{hx cdot gx y hx dpu gx} and~\eqref{calculo de Delta2}, we obtain that

\begin{enumerate}
\item If $u\coloneqq h\in G$ and $v\coloneqq k\in G$, then~\eqref{condiciones de socalo} becomes
\begin{equation*}
h\ot k \ot h = h\ot k\cdot h\ot h = h\ot k\dpu h\ot h.
\end{equation*}

\item If $u\coloneqq h\in G$ and $v\coloneqq kx\in Gx$, then~\eqref{condiciones de socalo} becomes
\begin{equation*}
h\ot kx\ot h= h\ot\chi(h)(k\cdot h)x\ot h = h\ot \chi(h)(k\dpu h)x\ot h.
\end{equation*}

\item If $u\coloneqq hx\in Gx$ and $v\coloneqq k\in G$, then~\eqref{condiciones de socalo} becomes
\begin{align*}
hx\ot k\ot ha + h\ot k\ot hx & = hx\ot k\cdot ha\ot ha + h\ot k\cdot h\ot hx\\ 
& = hx\ot k\dpu ha\ot ha + h\ot k\dpu h\ot hx.
\end{align*}

\item If $u\coloneqq hx\in Gx$ and $v\coloneqq kx\in Gx$, then~\eqref{condiciones de socalo} becomes
\begin{align*}
& hx\ot kx\ot ha + h\ot kx\ot hx\\ 
&= hx\ot \chi(ha)(k\cdot ha)x\ot ha + h\ot \lambda_{10} (k\cdot h)(1-a)\ot ha + h\ot \chi(h)(k\cdot h)x\ot hx\\ 
&= hx\ot \chi(ha)(k\dpu ha)x\ot ha + h\ot \lambda_{10} (k\dpu h)(1-a)\ot ha + h\ot \chi(h)(k\dpu h)x\ot hx.
\end{align*}

\end{enumerate}
From these facts it follows that $\sum_{h\in G} \mu_h h + \sum_{h\in G} \mu'_h hx\in \Soc(\mathcal{H})$ if and only if 

\begin{itemize}[itemsep=0.7ex, topsep=1.0ex]

\item[-] $\mu'_h = 0$, for all $h\in G$, 

\item[-] $h\in \Soc(G,\cdot,\dpu)$ and $\chi(h) = 1$, for all $h$ such that $\mu_h\ne 0$. 

\end{itemize}
In other words $\Soc(\mathcal{H}) = \mathds{k}[L]$, where $L\coloneqq \{h\in \Soc(G,\cdot,\dpu):\chi(h) = 1\}$. Consequently, 
\begin{equation*}
\frac{\mathcal{H}}{\Soc(\mathcal{H})^{+}\mathcal{H}} = \mathds{k}[G/L] \oplus \mathds{k}[G/L]x.
\end{equation*}

\section{Weak braiding structures on rank one Hopf algebras}\label{section 5}

In this section we compute the weak braiding operators associated with the Hopf $q$ braces $\mathcal{H}=(H_{\mathcal{D}},\cdot,\dpu)$,~ob\-tained in~Theo\-rems~\ref{clasificacion 1} and~\ref{n=2}, according to \cite{GGV}*{Corollary~4.11 and Theorem~5.15}.  We will use freely the notations in \cite{GGV}*{Remark~1.17}. Let $s\coloneqq s_{\mathcal{H}}$, so that $s(h \ot l) = {}^{h_{(1)}} l_{(1)}\ot {h_{(2)}}^{l_{(2)}}$. Note that $h^g,{}^h\!g\in G$, for all $h,g\in G$ (because the maps $h\ot l\mapsto h^l$ and $h\ot l\mapsto {}^hl$ send group like elements to group like elements). By items~2) and 4) of
Proposition~\ref{resumen}, we have 
\begin{equation}\label{ecua1}
(a^r)^g = a^r,\quad g^{a^r} = g\quad\text{and}\quad {}^a\hs g = g\dpu a^g = g\qquad \text{for all $g\in G$ and $r\in \mathds{N}_0$.} 
\end{equation}
By item~4) above \cite{GGV}*{Theorem~5.4}, we also have
\begin{equation}\label{ecua2}
(ha)^g = (h^{{}^a\hs g})a^g = (h^g) a\quad \text{for all $h,g\in G$.}
\end{equation}

\begin{proposition} Assume that we are under the hypotheses of Theorem~\ref{clasificacion 1} and let $\mathcal{H}$ be as in that theorem. We have
\begin{equation}\label{pepe}
(hx^j)^{gx^{j'}} = \begin{cases} \lambda^{-j}(g)h^g x^j &\text{if $j'=0$,}\\ 0 &\text{otherwise} \end{cases}\qquad\text{and}\qquad {}^{gx^{j'}}\! (hx^j) = \begin{cases} \xi^j(g^h){}^g\hs h x^j &\text{if $j'=0$,}\\ 0 &\text{otherwise.} \end{cases}
\end{equation}
\end{proposition}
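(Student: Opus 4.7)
The plan is to exploit the characterization of the operations $u^v$ and ${}^v u$ from \cite{GGV}*{Remark~1.17}, namely that $u^v$ is the partial inverse of $\overline{G}_\mathcal{H}$ (so that $u^{v_{(1)}} \cdot v_{(2)} = \epsilon(v) u$ holds) and similarly for ${}^v u$ via $\overline{G}_{\mathcal{H}^{\op}}$. Since $\mathcal{H}$ is regular, these partial inverses exist and these relations determine the operations uniquely. The strategy is then to make an ansatz, apply $\overline{G}_\mathcal{H}$ (respectively $\overline{G}_{\mathcal{H}^{\op}}$), and verify agreement using formulas \eqref{cdot y dpu 1} from Theorem~\ref{clasificacion 1}.

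First I would handle the grouplike case $j=j'=0$. Since $\Delta(g) = g\otimes g$ and $\epsilon(g)=1$, the defining relation collapses to $h^g\cdot g = h$ (and the analog with $\dpu$ for ${}^g h$), so $h^g$ and ${}^g h$ in $G$ are precisely the ones attached to the $q$-brace structure on $G$ induced by restriction, as claimed.

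Next, for $j>0$ and $j'=0$, I would write the ansatz $(hx^j)^g=\sum_{k,m} c_{k,m}kx^m$ and plug into $(hx^j)^g\cdot g = hx^j$. Formula~\eqref{cdot y dpu 1} gives $kx^m\cdot g = \lambda^m(g)(k\cdot g)x^m$, so matching $x$-degrees forces $c_{k,m}=0$ unless $m=j$, and then using the bijectivity of $k\mapsto k\cdot g$ on $G$ (regularity), I obtain $c_{h^g,j}=\lambda^{-j}(g)$, which yields the first asserted formula. The symmetric computation using $\overline{G}_{\mathcal{H}^{\op}}$ and the $\dpu$-version of~\eqref{cdot y dpu 1} gives ${}^{g}(hx^j) = \xi^j(g^h)\,{}^{g}\!h\,x^j$; the factor $\xi^j(g^h)$ (rather than $\xi^j(g)$) arises because $\overline{G}_{\mathcal{H}^{\op}}$ involves the twisted tensor factor $g^h$ coming from~\eqref{ecua2}, and one uses property~3) of Theorem~\ref{clasificacion 1} to see the character evaluates correctly.

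Finally, for $j'>0$, I expand $\Delta(gx^{j'})$ using~\eqref{calculo de Delta2}. Since $\epsilon(gx^{j'})=0$, the defining relation becomes a recursion in $j'$: the top term $(hx^j)^{gx^{j'}}\cdot ga^{j'}$ is expressed in terms of the already known lower-order contributions $(hx^j)^{gx^l}\cdot ga^lx^{j'-l}$ for $l<j'$; but by \eqref{cdot y dpu 1}, any multiplication $\cdot(ga^lx^{j'-l})$ with $j'-l>0$ vanishes. So only the $l=0$ term survives, and combined with the $l=j'$ term this forces $(hx^j)^{gx^{j'}} = 0$ inductively in $j'$; the same argument with $\dpu$ handles ${}^{gx^{j'}}(hx^j)$. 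The main obstacle is bookkeeping the correct character exponent in the formula for ${}^{gx^{j'}}(hx^j)$, where one must distinguish $\xi^j(g)$ from $\xi^j(g^h)$; this is resolved by invoking items 3) and 6) of Theorem~\ref{clasificacion 1}, which encode exactly the needed compatibility between $\xi$ and the group operations $\cdot,\dpu$.
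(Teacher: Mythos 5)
Your treatment of the first formula is essentially the paper's: one verifies that the proposed expression satisfies the identities (1.2) of \cite{GGV} characterizing $u^v$ as the partial inverse of $\overline{G}_{\mathcal{H}}$, and the degree bookkeeping via \eqref{cdot y dpu 1} and \eqref{calculo de Delta2} works as you describe. (One small slip: in the case $j'>0$ the term of $\sum_l \binom{j'}{l}_q (hx^j)^{gx^l}\cdot ga^l x^{j'-l}$ that survives is $l=j'$, i.e.\ the one whose second factor $ga^{j'}$ is grouplike, not $l=0$; all terms with $j'-l>0$ die by \eqref{cdot y dpu 1}, and the surviving equation $(hx^j)^{gx^{j'}}\cdot ga^{j'}=0$ forces the claim by regularity. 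This is easily repaired.)

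The second formula is where there is a genuine gap. The left action ${}^v u$ is \emph{not} the partial inverse of $\overline{G}_{\mathcal{H}^{\op}}$; it is defined (see \cite{GGV}*{Remark~1.17}, quoted in Section~\ref{section 5}) by ${}^xy\coloneqq y_{(2)}\dpu x^{y_{(1)}}$, a \emph{forward} application of $\dpu$ to the already-computed element $x^{y_{(1)}}$. Your ``symmetric computation'' would solve $k x^j\dpu g=hx^j$ for $kx^j$ and hence produce a coefficient $\xi^{-j}(g)$ (an inverse, mirroring the $\lambda^{-j}(g)$ of the first formula), whereas the correct coefficient is $\xi^{+j}(g^h)$ with a \emph{positive} exponent evaluated at $g^h$. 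No combination of items~3) and~6) of Theorem~\ref{clasificacion 1} yields $\xi(g)\xi(g^h)=1$ in general (try the structure of Remark~\ref{ejemplo particular'} with an arbitrary $\xi$), so the two candidate answers genuinely differ, and the appeal to \eqref{ecua2} does not introduce any factor $g^h$ into $\overline{G}_{\mathcal{H}^{\op}}$. The correct route, which is the paper's, is to expand ${}^{gx^{j'}}(hx^j)=(hx^j)_{(2)}\dpu (gx^{j'})^{(hx^j)_{(1)}}$ using Remark~\ref{formula de Delta}: by the first (already proved) formula only the term with grouplike first leg survives, giving $hx^j\dpu \lambda^{-j'}(h)g^hx^{j'}$, which by the second equality in \eqref{cdot y dpu 1} equals $\xi^j(g^h)\,(h\dpu g^h)\,x^j=\xi^j(g^h)\,{}^g\hs h\, x^j$ when $j'=0$ and $0$ otherwise.
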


\begin{proof} In order to prove the first equality it suffices to check that identities~(1.2) of \cite{GGV} hold, but this~fol\-lows by a direct computation. The second equality follows directly using Remark~\ref{formula de Delta}, the second equality in~\eqref{cdot y dpu 1} and that ${}^xy\coloneqq y_{(2)}\dpu x^{y_{(1)}}$, for all $x,y\in H_{\mathcal{D}}$ (see \cite{GGV}*{Remark~1.17}).
\end{proof}

\begin{remark} Let $\mathcal{H}$ be as in Theorem~\ref{clasificacion 1} and let $(H_{\mathcal{D}},s)$ be the weak braiding operator associated with~$\mathcal{H}$. Using Remark~\ref{formula de Delta}, the previous proposition and the second equality in~\eqref{ecua1}, we obtain that
$$
s(hx^j\ot gx^{j'}) = {}^h (gx^{j'}) \ot (hx^j)^{ga^{j'}} = \xi^{j'}(h^g){}^h\hs gx^{j'}\ot\lambda^{-j}(ga^{j'})h^gx^j.
$$
\end{remark}

\begin{proposition} Assume that we are under the hypotheses of Theorem~\ref{n=2} and let $\mathcal{H}$ be as in that theorem. We have
\begin{alignat*}{3}
&(hx)^g = \chi^{-1}(g) h^g x,&\qquad h^{gx} = 0,&\qquad (hx)^{gx} = \lambda_{10} \chi^{-1}(g) h^g(a-1),\\
& {}^g\hs (hx) = \chi(g^h){}^g\hs h x,&\qquad {}^{gx}h=0, &\qquad {}^{gx}(hx) = \lambda_{10} \chi^{-1}(h){}^g\hs h(a-1).
\end{alignat*}
\end{proposition}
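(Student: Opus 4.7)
The plan is to follow the two-stage strategy of the preceding proposition. First, I will establish the three right-action formulas $(hx)^g$, $h^{gx}$, and $(hx)^{gx}$ by verifying directly the defining identities~(1.2) of \cite{GGV} for each. The inputs are the coproducts $\Delta(g) = g\ot g$ and $\Delta(hx) = hx\ot ha + h\ot hx$; the operations from Theorem~\ref{n=2} (namely $hx\cdot g = \chi(g)(h\cdot g)x$, $h\cdot gx = 0$ and $hx\cdot gx = \lambda_{10}(h\cdot g)(1-a)$); the group-level formulas for $h^g$, valid because $\cdot$ restricts to a $q$-brace on $G$; and the identity $(ha)^g = h^g a$ from~\eqref{ecua2}. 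The case $(hx)^g$ reduces to a single Sweedler-term verification since $\Delta(g)$ has only one component, and the scalar $\chi^{-1}(g)$ enters through the commutation relation $xg = \chi(g)gx$. The case $h^{gx}$ vanishes because every Sweedler component of $\Delta(gx) = gx\ot ga + g\ot gx$ produces a factor of the form $h\cdot gx = 0$. The case $(hx)^{gx}$ is the heaviest, as both $\Delta(hx)$ and $\Delta(gx)$ contribute; the factor $\lambda_{10}(1-a)$ enters through $hx\cdot gx$.

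Second, I will derive the three left-action formulas from the identity ${}^xy = y_{(2)}\dpu x^{y_{(1)}}$ of \cite{GGV}*{Remark~1.17}, plugging in the right-action formulas just obtained. For ${}^g(hx)$ the expansion is $ha\dpu g^{hx} + hx\dpu g^h$; the first summand vanishes by $g^{hx} = 0$, and the second is simplified via $hx\dpu g^h = \chi(g^h)(h\dpu g^h)x$ from Theorem~\ref{n=2} together with the group-level identity ${}^gh = h\dpu g^h$. For ${}^{gx}h$, the expansion is $h\dpu (gx)^h = h\dpu \chi^{-1}(h)g^hx$, which is zero by Proposition~\ref{resumen}(3). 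For ${}^{gx}(hx)$, the expansion gives $ha\dpu (gx)^{hx} + hx\dpu (gx)^h$; using the formula $(gx)^{hx} = \lambda_{10}\chi^{-1}(h)g^h(a-1)$ just established, the first summand reduces to zero after two applications of the right $H_{\mathcal{D}}^{\op}$-module structure and Proposition~\ref{resumen}(4), while the second, computed via $hx\dpu g^hx = -\lambda_{10}(h\dpu g^h)(1-a)$ from Theorem~\ref{n=2}, yields the claimed expression $\lambda_{10}\chi^{-1}(h)({}^gh)(a-1)$.

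The main obstacle will be the bookkeeping in the $(hx)^{gx}$ case and the auxiliary computation $ha\dpu g^h(a-1) = 0$ (which needs both the module axiom and the fact $ha\dpu a = ha$), together with keeping track of the scalars from $\chi$ and $\lambda_{10}$ throughout. Knowing the target expressions in advance removes all guesswork; what remains is a careful but routine verification.
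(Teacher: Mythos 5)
Your proposal is correct and follows essentially the same route as the paper: the three right-action formulas are verified against identities~(1.2) of \cite{GGV} (with the $(hx)^{gx}$ case being the only nontrivial check, exactly as you anticipate), and the three left-action formulas are then obtained by expanding ${}^xy = y_{(2)}\dpu x^{y_{(1)}}$ with the coproduct of $hx$ and the operations of Theorem~\ref{n=2}. Your detailed expansions (including the cancellation $\lambda_{10}h(a-1)+\lambda_{10}h(1-a)=0$ hidden in the third right-action check and the vanishing of $ha\dpu g^h(a-1)$) match what the paper carries out or leaves as ``a direct computation.''
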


\begin{proof} In order to prove the first three equalities it suffices to check that identities~(1.2) of \cite{GGV} hold. For the first two identities this is very easy. We next consider the third one. By Theorem~\ref{n=2} and identities~\eqref{ecua1} and~\eqref{ecua2}, we have 
\begin{align*}
(hx\cdot (gx)_{(1)})^{(gx)_{(2)}} & = (hx\cdot g)^{gx} + (hx\cdot gx)^{ga}\\
& = \chi(g)\bigl((h\cdot g)x\bigr)^{gx} + \lambda_{10}\bigl((h\cdot g)(1-a)\bigr)^{ga}\\
& = \chi(g)\bigl((h\cdot g)x\bigr)^{gx} + \lambda_{10}(h\cdot g)^{ga}(1-a)\\
& = \lambda_{10}h(a-1) + \lambda_{10}h(1-a)\\
& = 0,
\end{align*}
which ends the proof of the third identity. The last three equalities follow by a direct computation.
\end{proof}

\begin{remark} Let $\mathcal{H}$ be as in Theorem~\ref{n=2} and let $(H_{\mathcal{D}},s)$ be the weak braiding operator associated with~$\mathcal{H}$. Using Remark~\ref{formula de Delta}, the previous proposition, the second equalities in~\eqref{formula hx cdot g y hx dpu g}, \eqref{h cdot gx y h dpu gx} and~\eqref{hx cdot gx y hx dpu gx} and equalities~\eqref{ecua1} and~\eqref{ecua2}, we obtain that 
\begin{align*}
& s(h\ot gx) = \chi(h^g) {}^h\hs g x\ot h^g,\\
& s(hx\ot g) = \chi^{-1}(g) {}^h\hs g\ot (h^g)x\\
\shortintertext{and}
& s(hx\ot gx) = \lambda_{10} \chi^{-1}(g) {}^h\hs g\ot (h^g)(a-1) + \chi(h^g)\chi^{-1}(ga) {}^h\hs gx\ot (h^g) x + \lambda_{10} \chi^{-1}(g){}^h\hs g(a-1)\ot (h^g)a.
\end{align*}
\end{remark}

\begin{bibdiv}
	\begin{biblist}
		
\bib{AGV}{article}{
	title={Hopf braces and Yang-Baxter operators},
	author={Angiono, Iv{\'a}n},
	author={Galindo, C{\'e}sar},
	author={Vendramin, Leandro},
	journal={Proceedings of the American Mathematical Society},
	volume={145},
	number={5},
	pages={1981--1995},
	year={2017},
	review={\MR{3611314}}
}

\bib{CAV}{article}{
	title={Abelian regular subgroups of the affine group and radical rings},
	author={Caranti, Andrea},
	author={Volta, Francesca Dalla},
	author={Sala, Massimiliano},
	journal={Publ. Math. Debrecen},
	volume={69},
	number={3},
	year={2006},
	pages={297--308}
}

\bib{Ch}{article}{
	title={Fixed-point free endomorphisms and Hopf Galois structures},
	author={Childs, Lindsay},
	journal={Proceedings of the American Mathematical Society},
	volume={141},
	number={4},
	pages={1255--1265},
	year={2013},
	review={\MR{3008873}} 	
}

\bib{DG}{article}{
	title={On groups of I-type and involutive Yang--Baxter groups},
	author={David, Nir Ben},
	author={Ginosar, Yuval},
	journal={Journal of Algebra},
	volume={458},
	pages={197--206},
	year={2016},
	publisher={Elsevier},
	review={\MR{3500774}}
}	

\bib{De1}{article}{
	title={Set-theoretic solutions of the Yang--Baxter equation, RC-calculus, and Garside germs},
	author={Dehornoy, Patrick},
	journal={Advances in Mathematics},
	volume={282},
	pages={93--127},
	year={2015},
	publisher={Elsevier},
	review={\MR{3374524}}
}

\bib{DDM}{article}{
	title={Garside families and Garside germs},
	author={Dehornoy, Patrick},
	author={Digne, Fran{\c{c}}ois},
	author={Michel, Jean},
	journal={Journal of Algebra},
	volume={380},
	pages={109--145},
	year={2013},
	publisher={Elsevier},
	review={\MR{3023229}}
}
		
\bib{D}{article}{
	author={Drinfel\cprime d, V. G.},
	title={On some unsolved problems in quantum group theory},
	conference={
		title={Quantum groups},
		address={Leningrad},
		date={1990},
	},
	book={
		series={Lecture Notes in Math.},
		volume={1510},
		publisher={Springer, Berlin},
	},
	date={1992},
	pages={1--8},
	review={\MR{1183474}}
}

\bib{ESS}{article}{
	author={Etingof, Pavel},
	author={Schedler, Travis},
	author={Soloviev, Alexandre},
	title={Set-theoretical solutions to the quantum Yang-Baxter equation},
	journal={Duke Math. J.},
	volume={100},
	date={1999},
	number={2},
	pages={169--209},
	issn={0012-7094},
	review={\MR{1722951}},
	doi={10.1215/S0012-7094-99-10007-X},
}

\bib{GI1}{article}{
	title={Noetherian properties of skew polynomial rings with binomial relations},
	author={Gateva-Ivanova, Tatiana},
	journal={Transactions of the American Mathematical Society},
	volume={343},
	number={1},
	pages={203--219},
	year={1994},
	review={\MR{1173854}}	
}

\bib{GI2}{article}{
	title={Set-theoretic solutions of the Yang--Baxter equation, braces and symmetric groups},
	author={Gateva-Ivanova, Tatiana},
	journal={Advances in Mathematics},
	volume={338},
	pages={649--701},
	year={2018},
	publisher={Elsevier},
	review={\MR{3861714}}
}

\bib{GI3}{article}{
	title={Skew polynomial rings with binomial relations},
	author={Gateva-Ivanova, Tatiana},
	journal={Journal of Algebra},
	volume={185},
	number={3},
	pages={710--753},
	year={1996},
	publisher={Elsevier},
	review={\MR{1419721}}	
}

\bib{GI4}{article}{
	title={Quadratic algebras, Yang--Baxter equation, and Artin--Schelter regularity},
	author={Gateva-Ivanova, Tatiana},
	journal={Advances in Mathematics},
	volume={230},
	number={4-6},
	pages={2152--2175},
	year={2012},
	publisher={Elsevier},
	review={\MR{2927367}}
}

\bib{GIVB}{article}{
	title={Semigroups of I-Type},
	author={Gateva-Ivanova, Tatiana},
	author={Van den Bergh, Michel},
	journal={Journal of Algebra},
	volume={206},
	number={1},
	pages={97--112},
	year={1998},
	publisher={Elsevier},
	review={\MR{1637256}}
}

\bib{GV}{article}{
	title={Skew braces and the Yang--Baxter equation},
	author={Guarnieri, Leandro},
	author={Vendramin, Leandro},
	journal={Mathematics of Computation},
	volume={86},
	number={307},
	pages={2519--2534},
	year={2017},
	review={\MR{3647970}}
}

\bib{GGV}{article}{
  title={Set-theoretic type solutions of the braid equation},
  author={Guccione, Jorge},
  author={Guccione, Juan José},
  author={Valqui, Christian},
  journal={Journal of Algebra},
  volume={644},
  pages={461--525},
  year={2024},
  publisher={Elsevier}
}

\bib{JO}{article}{
	title={Monoids and groups of I-type},
	author={Jespers, Eric},
	author={Okni{\'n}ski, Jan},
	journal={Algebras and representation theory},
	volume={8},
	number={5},
	pages={709--729},
	year={2005},
	publisher={Springer},
	review={\MR{2189580}}	
}

\bib{KR}{article}{
	title={Finite-dimensional Hopf algebras of rank one in characteristic zero},
	author={Krop, Leonid},
	author={Radford, David E},
	journal={Journal of Algebra},
	volume={302},
	number={1},
	pages={214--230},
	year={2006},
	publisher={Elsevier}
}

\bib{LYZ}{article}{
	title={On the set-theoretical Yang-Baxter equation},
	author={Lu, Jiang-Hua},
	author={Yan, Min},
	author={Zhu, Yong-Chang},
	journal={Duke Mathematical Journal},
	volume={104},
	number={1},
	pages={1--18},
	year={2000},
	publisher={Durham, NC: Duke University Press, 1935-},
	review={\MR{1769723}}
}

\bib{R2}{article}{
	title={A covering theory for non-involutive set-theoretic solutions to the Yang--Baxter equation},
	author={Rump, Wolfgang},
	journal={Journal of Algebra},
	volume={520},
	pages={136--170},
	year={2019},
	publisher={Elsevier},
	review={\MR{3881192}}
}

\end{biblist}
\end{bibdiv}
    
\end{document}